\documentclass[a4paper]{amsart}

\usepackage{amsrefs}

\usepackage{etex}
\usepackage{stmaryrd}
% \usepackage[unicode=true,pdfusetitle,
%    bookmarks=true, 
%    bookmarksnumbered=false, 
%    bookmarksopen=false,
%    breaklinks=false, 
%    pdfborder={0 0 1}, 
%    backref=false, 
%    colorlinks=false]{hyperref}
\usepackage{hyperref}
% \makeatletter
% \pdfstringdefDisableCommands{\let\(\fake@math}
% \newcommand\fake@math{}% just for safety
% \def\fake@math#1\){[math]}
% \makeatother

% \usepackage[utf8]{inputenc}
%\usepackage[latin1]{inputenc}
\usepackage{txfonts,amsmath,amstext,amsthm,amscd,amsopn,verbatim,amssymb,amsfonts,mathtools,enumitem}
\usepackage{fullpage}

\usepackage{todonotes}

\usepackage{adjustbox}

% \usepackage[obeyspaces,hyphens,spaces]{url}
% \renewcommand*{\showkeyslabelformat}[1]{%
%    \fbox{\vbox{\hsize=1.1cm\normalfont\small\url{#1}\par}}}

% special fonts

\usepackage[bbgreekl]{mathbbol}
%\usepackage[babel=true,kerning=true]{microtype}
%\usepackage{graphicx}
%\usepackage{color}
%\usepackage{pxfonts,txfonts}
%\usepackage{pstricks, pstricks-add, pst-node, pst-coil}
%\usepackage{array}
%\usepackage[all, 2cell ]{xy}  \UseAllTwocells \SilentMatrices
%\usepackage{graphicx}

% tikz

\usepackage{tikz}

\usetikzlibrary{matrix}
\usetikzlibrary{patterns, shapes}
\usetikzlibrary{arrows}
\usetikzlibrary{calc,3d}
\usetikzlibrary{decorations,decorations.pathmorphing, decorations.pathreplacing}
\usetikzlibrary{through}
\tikzset{ext/.style={circle, draw,inner sep=1pt},int/.style={circle,draw,fill,inner sep=1pt},nil/.style={inner sep=1pt}}
\tikzset{exte/.style={circle, draw,inner sep=3pt},inte/.style={circle,draw,fill,inner sep=3pt}}
\tikzset{diagram/.style={matrix of math nodes, row sep=3em, column sep=2.5em, text height=1.5ex, text depth=0.25ex}}
\tikzset{diagram2/.style={matrix of math nodes, row sep=0.5em, column sep=0.5em, text height=1.5ex, text depth=0.25ex}}
\tikzset{every picture/.style={baseline=-.65ex}}
\tikzstyle{every loop}=[draw]
\tikzstyle{rloop}=[ out=10, in=-10, loop, distance=3em] 
\tikzstyle{aloop}=[ out=100, in=80, loop, distance=3em] 
\usepackage{tikz-cd}

% bibliography

%\renewcommand{\bibsection}{\part*{Bibliography}}

% theorems

\theoremstyle{plain}

\newtheorem{thm}{Theorem}[section]

\newtheorem{prop}[thm]{Proposition}

\newtheorem{lemma}[thm]{Lemma}

\theoremstyle{definition}

% notation

\hypersetup{pdfauthor={Thomas Willwacher}}

%\newcommand{\alg}[1]{K(\stp_n, I\e_m\{m \}) {{#1}}}

 % commutator

 % equation reference

\newcommand{\Q}{{\mathbb{Q}}}

% the lie admissible version

\newcommand{\cP}{\mathsf{P}}

 % Nerve

% {{\mathsf{fGraphs}}}
% {{\mathsf{fGraphs}}}
% {{\mathsf{fGraphs}}}

 % predual
 % predual

 \newcommand{\Feyn}{\mathrm{Feyn}}
 \newcommand{\AFeyn}{\mathrm{AFeyn}}

\newcommand{\BV}{\mathsf{BV}}
\newcommand{\Grav}{\mathsf{Grav}}
\newcommand{\tGrav}{\widetilde{\Grav}}

 % free comodule

%\newcommand{\Ger}{\mathsf{Ger}}

\renewcommand{\Bar}{{\mathtt{B}}}

\newcommand{\kk}{\mathfrak{K}}

\newcommand{\Com}{\mathsf{Com}}

 % should have only one here 

%  4 lines from Victor
%%\newcommand{\FM}{\mathsf{FM}}

%\newcommand{\Def}{\mathrm{Def}}

\newcommand{\bpm}{\begin{pmatrix}}
\newcommand{\epm}{\end{pmatrix}}

%\newcommand{\fGCc}{\mathrm{fGCc}}

%\mathbb{\Delta}}

%\newcommand{\Emb}{\mathrm{Emb}}
%\newcommand{\Imm}{\mathrm{Imm}}
%\newcommand{\Embbar}{\overline{\mathrm{Emb}}}

\newcommand{\gr}{\mathrm{gr}}

%\newcommand{\FreeOp}{\mathbb{F}_{op}}
%\newcommand{\FreeOpc}{\mathbb{F}_{op}^c}
%\newcommand{\FreeLie}{\mathbb{F}_{Lie}}

%\newcommand{\conn}{{\mathrm{conn}}}

 % cobar construction
 % coaugmentation cokernel

 % cogenerators of W construction

\newcommand{\beq}[1]{\begin{equation}\label{#1}}
\newcommand{\eeq}{\end{equation}}

\newcommand{\ar}{\mathrm{ar}}

\newcommand{\M}{\mathcal{M}}
\newcommand{\MM}{\overline{\M}}

\DeclareMathAlphabet{\mathsfit}{OT1}{cmss}{m}{sl}

\DeclareMathOperator{\COp}{\mathsfit{C}}

\DeclareMathOperator{\POp}{{\mathsfit{P}}}

%\DeclareMathOperator{\Seq}{\mathcal{S}\mathit{eq}}  % symmetric sequences
%\DeclareMathOperator{\Op}{\mathcal{O}\mathit{p}}    % operads
%\DeclareMathOperator{\Hopf}{\mathcal{H}\mathit{opf}}    % operads

%\newcommand{\La}{\Lambda}

 % generic manifold
 % another generic manifold

% Categorical structures

%\DeclareMathOperator{\Map}{\mathtt{Map}}
%\DeclareMathOperator{\Mor}{\mathtt{Mor}}
%\DeclareMathOperator{\End}{\mathtt{End}}
%\DeclareMathOperator{\Hom}{\mathtt{Hom}}
%\DeclareMathOperator{\Der}{\mathtt{Der}}
%\DeclareMathOperator{\Aut}{\mathtt{Aut}}
%\DeclareMathOperator{\Def}{\mathtt{Def}}
%\DeclareMathOperator{\BiDer}{\mathtt{BiDer}}

%{F^c}

\author{Thomas Willwacher}
\address{Department of Mathematics \\ ETH Zurich \\
R\"amistrasse 101 \\
8092 Zurich, Switzerland}
\email{thomas.willwacher@math.ethz.ch}
\thanks{The author has been partially supported by the NCCR Swissmap, funded by the Swiss National Science Foundation}

\begin{document}
\title{Cyclic model for the dg dual of the BV operad}
\begin{abstract}
We describe a new small cyclic operad model $Q\BV$ for the dg dual operad $\Bar^c\BV^*$ of the Batalin-Vilkovisky operad.
As an application, we show that the Feynman transform of $\BV$ is quasi-isomorphic to the amputated Feynman transform of $\Bar^c\BV^*$, in complementary weights and degrees.
\end{abstract}
\maketitle

\section{Introduction}
The Batalin-Vilkovisky operad $\BV$ is the homology operad of the framed little 2-disks operad.
It is well known that the $\BV$ operad is a cyclic operad \cite{BudneyCyclic}.
This paper is concerned with the dg dual 1-shifted cyclic operad $\Bar^c\BV^*$, defined as the cyclic operadic bar construction of the dual operad $\BV^*$.
This operad $\Bar^c\BV^*$ has also been studied in the literature. 
There are fairly simple explicit models for $\Bar^c\BV^*$ as a non-cyclic operad, for example the Koszul dual operad $\BV^!$ \cite{GCTV}.
The minimal model $H(\Bar^c\BV^*)$ of $\Bar^c\BV^*$ has also been studied in the literature \cite{DCV}. However, the homotopy operad structure on $H(\Bar^c\BV^*)$ is not known very explicitly.

Hence we lack "small" explicit models for $\Bar^c\BV^*$ that are also cyclic operads.
The purpose of this paper is to fill this gap.
More precisely, define the symmetric sequence of dg vector spaces 
\[
Q\BV((r)) := 
\begin{cases}
    (\BV((r))[6-2r] \otimes \Q[v_1,\dots,v_r], d) & \text{if $r\geq 3$} \\
    \Q[u][1] & \text{if $r=2$}  
\end{cases},
\]
where $v_j$ are formal variables of cohomological degree $-2$, $u$ is a formal variable of degree $+2$, and the differential acts such that 
\[
d (x\otimes p(v_1,\dots, v_r))
=
\sum_{j=1}^r
(-1)^{|x|} (x\otimes_j \Delta) \otimes \frac{\partial}{\partial v_j} p(v_1,\dots, v_r).
\]
The expert reader may recognize that for $r\geq 3$
\[
    Q\BV((r)) \simeq \BV((r)) \sslash H_\bullet(S_1)^{\otimes r}    
\]
is just a model for the homotopy quotient of $\BV((r))$ by the circle actions. This is why we use the letter "$Q$".

Our first main result is then:
\begin{thm}\label{thm:main1}
There is a 1-shifted cyclic $\infty$-operad structure on $Q\BV$ with only $\leq 3$-ary composition operations, constructed in section \ref{sec:Qdual} below, such that there is a quasi-isomorphism
\[
    Q\BV \simeq \Bar^c\BV^*   
\]
of 1-shifted cyclic $\infty$-operads.
\end{thm}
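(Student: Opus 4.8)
The plan is to exhibit an explicit morphism $Q\BV \to \Bar^c\BV^*$ of $1$-shifted cyclic $\infty$-operads whose underlying map of dg symmetric sequences is a quasi-isomorphism, and then to deduce that it is an $\infty$-quasi-isomorphism. I would begin by making the target completely explicit. Since $\BV$ carries no internal differential, $\Bar^c\BV^*$ is, as a dg symmetric sequence, a sum over rooted trees with vertices of valence $\geq 2$ decorated by the coaugmentation coideal $\widebar{\BV}^*$, one suspension per internal edge, and differential dual to cyclic operadic composition, i.e.\ the sum over vertex splittings. Feeding in the semidirect-product decomposition $\BV \cong \e_2 \rtimes H_\bullet(S^1)$ --- the homological shadow of $\mathrm{fD}_2 \cong D_2 \rtimes S^1$ --- each vertex decoration factors into an $\e_2$-part and one copy of the Pontryagin algebra $H_\bullet(S^1) = \Q[\Delta]/(\Delta^2)$ per incident half-edge; the bivalent vertices, decorated by $\widebar{\BV}((1)) = \Q\Delta$, then organise into chains running along the edges and out to the legs. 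Summing the chains attached to a fixed leg produces a copy of the Koszul dual of $H_\bullet(S^1)$, namely a polynomial algebra $\Q[v]$ (dually $\Q[u]$) with generator in degree $\mp 2$: this is the combinatorial origin of the polynomial factors in $Q\BV$ and, via the homotopy-quotient picture recalled in the introduction, of the letter $Q$.

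I would next pin down the homology of both sides arity by arity. On the $Q\BV$ side the homotopy-quotient description does the work: $\BV((r)) = \e_2((r)) \otimes H_\bullet(S^1)^{\otimes r}$ is free over $H_\bullet(S^1)^{\otimes r}$, so the complex $\bigl(Q\BV((r)), d\bigr)$ --- which for $r \geq 3$ computes the derived tensor product of this module with $\Q$ --- has homology $\e_2((r))[6-2r]$, concentrated in Tor-degree zero. On the $\Bar^c\BV^*$ side one uses that $\BV$ is Koszul \cite{GCTV}, so the homology is the Koszul dual (co)operad; under the same semidirect-product and Koszul-duality bookkeeping, with $\e_2$ cyclically Koszul self-dual and $H_\bullet(S^1)^! = \Q[v]$, this matches $\e_2((r))[6-2r]$ for $r \geq 3$ and $\Q[u][1]$ in arity $2$. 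The exceptional arity-$2$ answer reflects that a corolla with three unary chains is symmetric under the cyclic $S_3$, so the cyclically-forced third circle direction is not resolved away and the full polynomial algebra $\Q[u]$ survives; this is also why arity $2$ must be treated by a separate formula. With the homologies identified, I would fix a linear quasi-isomorphism realising this identification.

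To upgrade this to a morphism of $1$-shifted cyclic $\infty$-operads I would use the structure on $Q\BV$ built in Section~\ref{sec:Qdual}: a morphism into $\Bar^c\BV^*$ amounts to a Maurer--Cartan-type element in the convolution dg Lie algebra controlling maps from $Q\BV$ into the cyclic bar/cobar construction, which I would construct by induction on arity, taking the leading component to be the linear quasi-isomorphism above and solving the obstruction equations stage by stage, the obstructions lying in a deformation complex that is acyclic in the relevant internal degrees by Koszulness. Since the leading term is a quasi-isomorphism, the resulting $\infty$-morphism is automatically one. Equivalently, one may realise $Q\BV$ directly as the homotopy transfer of $\Bar^c\BV^*$ along a contraction assembled leg-by-leg from the Koszul contraction of $\bigl(H_\bullet(S^1)\otimes\Q[v],\,\Delta\,\partial_v\bigr)\simeq\Q$ together with a contraction of the $\e_2$-part, which would produce the $\infty$-structure of Section~\ref{sec:Qdual} and the $\infty$-quasi-isomorphism in one stroke.

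The main obstacle --- the part that genuinely needs care rather than routine homological algebra --- is to ensure that the resulting $1$-shifted cyclic $\infty$-operad structure on $Q\BV$ is carried by composition operations of arity $\leq 3$ only. This does not follow from a generic transfer, and one must choose the contracting homotopies so that the tree sums defining the $\geq 4$-ary transferred operations telescope to zero, exploiting that $\e_2$ is quadratic and that, after the circle quotient, the sole surviving higher structure is a single ternary homotopy for associativity of the product together with the interaction of the product with the variables $v_j$ and $\Delta$. Alongside this, the remaining verifications are bookkeeping: cyclic $S_{r+1}$-equivariance of all structure maps, control of the suspension signs pervading the $1$-shifted cyclic bar/Feynman formalism, and the compatible matching of the generic arity-$\geq 3$ description with the exceptional arity-$1$ and arity-$2$ pieces along the tree decompositions.
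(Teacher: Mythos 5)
There are genuine gaps. First, your arity-wise identification of the homologies is incorrect. The splitting $\BV \cong \e_2\rtimes H_\bullet(S^1)$ is a statement about the (non-cyclic) operad: it gives one circle factor per \emph{input}, and the cyclic structure does not split off a circle at the output slot. In particular $\BV((r))\neq \e_2((r))\otimes H_\bullet(S^1)^{\otimes r}$ (the dimensions already disagree by a factor of $2$), and the homology of $\bigl(Q\BV((r)),d\bigr)$ for $r\geq 3$ is not $\e_2((r))[6-2r]$ concentrated in a single polynomial degree: it is the gravity operad, $H(Q\BV((r)))\cong \tGrav((r))\cong H_c^\bullet(\M_{0,r})$ (section \ref{sec:gravity} of the paper), which is also what $H(\Bar^c\BV^*((r)))$ equals by the result of \cite{DCV} quoted in the proof --- not a Koszul-dual copy of $\e_2$. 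The common homology being gravity (plus $\Q[u][1]$ in arity $2$) is in the right spirit, but as stated your computation of both sides is wrong, and it is precisely the input needed for any ``leading term is a quasi-isomorphism'' argument.

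Second, and more seriously, the heart of the theorem --- producing an $\infty$-morphism between $\Bar^c\BV^*$ and the specific $\leq 3$-ary structure of section \ref{sec:Qdual} --- is not established by your argument. You propose to solve obstruction equations in a convolution/deformation complex and assert acyclicity ``by Koszulness'', but Koszulness of $\BV$ is a non-cyclic statement and gives no control over the cyclic deformation complex in the relevant degrees; no vanishing argument is given, and if such generic vanishing held the delicate construction in the paper would be unnecessary. Your alternative route via homotopy transfer runs into the difficulty you yourself flag --- arranging the contractions so that all $\geq 4$-ary transferred operations vanish and the result coincides with the structure of section \ref{sec:Qdual} --- and you offer no mechanism for this. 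The paper supplies exactly this missing content: an explicit cooperad morphism $F:\BV^*\to\Bar Q\BV$ whose cogenerator components are the polynomials $p_n$ with genus-zero $\psi$-class intersection numbers as coefficients, with compatibility with the differential being precisely the topological recursion relation (Lemmas \ref{lem:top recur} and \ref{lem:poly top recur}); the quasi-isomorphism property is then proved not by generic transfer but by checking arities $2,3$ and an induction using Getzler's theorem that the gravity operad is generated by its top-weight parts, with the ternary operation $\mu_{1,1,2,1}(x_n,u,x_3)$ and weight bookkeeping killing all correction terms. Without an analogue of these two ingredients your outline does not close.
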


We hence advertise $Q\BV$ as our small and completely explicit model of $\Bar^c\BV^*$. 
As an application of our model, we can show the following result that has been conjectured in \cite{BrueckBorinskyWillwacher}.

\begin{thm}\label{thm:main2}
The cohomology of the Feynman transform of $\BV$ and that of the amputated Feynman transform of $\Bar^c\BV^*$ are isomorphic in complementary degree and weights, 
\[
    \gr_{W} H^{-k}(\Feyn(\BV)((g,n)))
    \simeq
    \gr_{6g-6+2n-W} H^{6g-6+2n-k}(\AFeyn_{\kk}(\Bar^c\BV^*)((g,n)) ) 
    ,
\]
for all $W$ and all $(g,n)$ such that $2g+n\geq 3$.
\end{thm}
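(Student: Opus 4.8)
The plan is to deduce Theorem~\ref{thm:main2} from Theorem~\ref{thm:main1} by making both Feynman transforms explicit on the small model $Q\BV$ and then comparing the two resulting graph complexes by hand.

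\textbf{Step 1: reduction to $Q\BV$.} The first observation I would make is that the amputated Feynman transform $\AFeyn_{\kk}$ sends quasi-isomorphisms of $1$-shifted cyclic $\infty$-operads to quasi-isomorphisms of the collections $((g,n))$. This is the standard spectral-sequence argument: for fixed $(g,n)$ one filters the graph complex computing $\AFeyn_{\kk}(-)((g,n))$ by the number of edges (equivalently, for fixed first Betti number, by the number of vertices), so that on the associated graded only the internal differential of the decorating $\infty$-operad survives at each vertex; since this filtration is bounded for fixed $(g,n)$, a quasi-isomorphism of decorations induces one on the first page, hence on cohomology. Applying this to the quasi-isomorphism $Q\BV\simeq\Bar^c\BV^*$ of Theorem~\ref{thm:main1} reduces the statement to proving
\[
    \gr_{W} H^{-k}\big(\Feyn(\BV)((g,n))\big)
    \;\simeq\;
    \gr_{6g-6+2n-W} H^{6g-6+2n-k}\big(\AFeyn_{\kk}(Q\BV)((g,n))\big)
\]
for all $(g,n)$ with $2g+n\geq 3$.

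\textbf{Step 2: both sides as graph complexes, and the bookkeeping.} Both sides are now described by explicit complexes spanned by the same connected graphs $\Gamma$ of genus $g$ with $n$ legs, modulo automorphisms. A generator of $\Feyn(\BV)((g,n))$ is such a $\Gamma$ with each vertex $v$ of valence $r_v$ decorated by the linear dual $\BV((r_v))^*$, twisted by the determinant of the edge set, the differential being dual to operadic edge contraction (vertex splitting); the internal differential of $\BV$ vanishes and the weight $W$ is the weight grading of $\BV^*$ summed over vertices. A generator of $\AFeyn_{\kk}(Q\BV)((g,n))$ is the \emph{same} $\Gamma$, now with $v$ decorated by $\BV((r_v))[6-2r_v]\otimes\Q[v_1,\dots,v_{r_v}]$ for $r_v\ge 3$ and by $\Q[u][1]$ for $r_v=2$; its differential is the sum of the internal $Q\BV$-differential (insertion of $\Delta$ against $\partial_{v_j}$), a graph-contraction term, and the amputation term that trades a bivalent $\Q[u][1]$-vertex on an edge for a degree shift. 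Using $\sum_v r_v = 2\#E_\Gamma+n$ and $g = \#E_\Gamma-\#V_\Gamma+1$, the vertex shifts assemble as $\sum_v(2r_v-6) = (6g-6+2n)-2\#E_\Gamma$, and the leftover $2\#E_\Gamma$ together with the edge shifts built into the normalisation of $\Feyn$ is absorbed into the Feynman/amputation twist; this is the origin of the global degree shift by $6g-6+2n$, while the polynomial degree in the $v_j$ and $u$ together with the $\BV$-weight is what gets reflected to $6g-6+2n-W$. In this language the assertion becomes: graph by graph, the $Q\BV$-decorated complex is bar--cobar dual to the $\BV^*$-decorated one, compatibly with the graph-level differentials.

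\textbf{Step 3: the comparison map.} Concretely I would construct a pairing
\[
    \langle-,-\rangle\colon \AFeyn_{\kk}(Q\BV)((g,n)) \otimes \Feyn(\BV)((g,n)) \longrightarrow \Q[6-6g-2n]
\]
which on a fixed graph $\Gamma$ pairs the determinant lines of the edges and, at each vertex of valence $r\ge 3$, pairs the $\BV((r))$-factor with the $\BV((r))^*$-factor by the canonical evaluation (the shift $[6-2r]$ supplying exactly the degree needed) while sending the polynomial factor $p(v_1,\dots,v_r)$ to its top coefficient; at bivalent vertices one uses the evaluation pairing of $\Q[u][1]$ against $\Bar^c\BV^*((2))^*\simeq\BV((2))^*$ coming from the bar construction. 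The chain-map identity $\langle d(-),-\rangle\pm\langle-,d(-)\rangle=0$ then holds term by term: the internal $\Delta$-against-$\partial_{v_j}$ differential of $Q\BV$ is adjoint to the part of the $\Feyn(\BV)$-differential that re-expands a vertex using the single generating operation $\Delta$, the graph-contraction term of $\AFeyn_{\kk}(Q\BV)$ is adjoint to vertex splitting by the very construction of the Feynman transform, and the amputation term is adjoint to the bivalent-vertex part. This yields a map $\AFeyn_{\kk}(Q\BV)((g,n)) \to \Feyn(\BV)((g,n))^\vee[6-6g-2n]$ respecting the (dualised) weight filtration, which reproduces the degree shift $6g-6+2n$ and the weight reflection $W\mapsto 6g-6+2n-W$.

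\textbf{Step 4: that the comparison map is a quasi-isomorphism --- the main obstacle.} It remains to show perfectness on cohomology. I would filter both complexes by the total polynomial degree in the $v_j$ and $u$ (equivalently, dually, by the number of recorded $\Delta$-insertions on the $\Feyn(\BV)$ side). On the associated graded the statement localises to a graphwise claim: that $(\BV((r))\otimes\Q[v_1,\dots,v_r],d)[6-2r]$ pairs perfectly with $\BV((r))^*$, and likewise for the $\Q[u][1]$ at bivalent vertices and for the edges. This is precisely the assertion that $Q\BV((r))$ is a finite-type resolution computing the homotopy quotient of $\BV((r))$ by the $r$ circle actions, as remarked in the introduction, together with the Koszulness of $\BV$ \cite{GCTV} (equivalently, of the underlying Gerstenhaber operad), which supplies acyclicity of the relevant $\Delta$-Koszul complexes and handles the bivalent ($r=2$) case as a bar--cobar acyclicity. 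The genuine difficulty, and the point at which the explicitness of $Q\BV$ is indispensable, is to run this fibrewise argument compatibly with the graph-level differential: one must check that the filtration is exhaustive and bounded for fixed $(g,n)$ --- which is where the hypothesis $2g+n\ge 3$ enters, guaranteeing after the reductions that only finitely many graphs contribute and ruling out the degenerate boundary cases --- that the spectral sequences on both sides converge to the stated cohomologies, and that the signs in the cyclic structure and the determinant twists are consistent throughout. Once the induced map on the first page is seen to be an isomorphism, the same holds on cohomology, which is the content of Theorem~\ref{thm:main2}.
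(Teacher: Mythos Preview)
Your Step 1 is correct and matches the paper's reduction.

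The trouble begins in Step 2: in this paper's convention (see the displayed formula for $\Feyn(\POp)((g,n))$ in Section~\ref{sec:feyn}), the Feynman transform is a cofree modular \emph{cooperad} with vertices decorated by $\overline{\POp}$, \emph{not} by $\POp^*$, and its differential \emph{contracts} edges rather than splitting vertices. So $\Feyn(\BV)$ has $\BV$-decorations at every vertex, just as $\AFeyn_\kk(Q\BV)$ does (the $\BV$-factor inside $Q\BV$). This invalidates the pairing approach of Steps 3--4: there is no natural evaluation between a $\BV((r))$-decoration and another $\BV((r))$-decoration, and the proposed ``top coefficient'' map on the polynomial factor is neither well-defined nor perfect. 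Your graphwise claim in Step~4 --- that $(Q\BV((r)),d)$ pairs perfectly with $\BV((r))^*$ --- is false on the nose: $H(Q\BV((r)))\cong\tGrav((r))\cong H_c^\bullet(\M_{0,r})$, which has no such duality with $\BV((r))$.

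The paper exploits precisely the observation that both sides carry the \emph{same} $\BV$-decorations. One absorbs the bivalent vertices into edge and leg decorations on \emph{both} sides, obtaining complexes indexed by the same $\geq 3$-valent graphs with identical $\POp((S_v))$-factors at vertices but different edge/leg factors $V_E,V_L$ versus $V_E',V_L'$ (equations \eqref{equ:FeynP2} and \eqref{equ:FeynQP3}). The comparison map $\Phi_{g,n}$ is then the \emph{identity} on the $\BV$-decorations and explicit maps $\phi_E,\phi_L$ on edge and leg decorations. Proving $\phi_E$ is a quasi-isomorphism (Lemma~\ref{lem:VEVL qiso}) uses only the Koszulness of the associative algebra $\Q[u]$, not of $\BV$; a spectral sequence filtering by the number of $\geq 3$-valent vertices minus weight then finishes the argument. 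So the missing idea is that this is a direct chain map, not a duality pairing, and the real work happens entirely on the edge decorations.
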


We note that the case $W=0$ of this statement is the main result of \cite{HainautPetersen}, while the case $W=2$ is the main result of \cite{BrueckBorinskyWillwacher}, which we generalize here to all weights. The right-hand side of the formula of Theorem \ref{thm:main2} computes the cohomology of the handlebody group \cite{Giansiracusa}, and the left-hand side is (somewhat) computable for low $W$ \cite{BrueckBorinskyWillwacher}. 

The rest of the paper is organized as follows.
In section \ref{sec:notation} we briefly recall necessary prerequisites and definitions.
Section \ref{sec:Qdual} contains the definition of the homtopy operad structure on $Q\BV$, and section \ref{sec:proof1} the proof of Theorem \ref{thm:main1}.
Curiously, the main ingredient of the proof are the topological recursion relations for genus zero intersection numbers.
Finally, the application to the Feynman transforms and the proof of Theorem \ref{thm:main2} can be found in section \ref{sec:feyn}.
% Finally, section \ref{sec:conj} contains some conjectures and open ends for future work.

\subsection*{Acknowledgements}
The author is grateful for discussions with Michael Borinsky, Benjamin Br\"uck, Sergei Merkulov, Dan Petersen and Marko \v Zivkovi\'c.

\section{Notation and prerequisites}\label{sec:notation}
\subsection{Differential graded vector spaces}
We work over the ground field $\Q$, that is, all vector spaces are understood over $\Q$ and (co)homology is taken with rational coefficients. 
We generally use cohomological conventions: All differentials on differential graded vector spaces will have degree $+1$.
We may convert from homological conventions by reversing the grading. For a graded vector space $V$ we write equivalently
\[
    V^{k} = V_{-k}     
\]
for the subspace of cohomological degree $k$ or homological degree $-k$.
The $j$-fold downwards degree shift is denoted by $V[j]$. For example if $V$ is concentrated in degree 0, then $V[j]$ is concentrated in degree $-j$.

\subsection{Multi-indices}
\label{sec:multiind}
For $\underline i=(i_0,i_1,\dots, i_n)\in \mathbb Z^n_{\geq 0}$ a multi-index we use the following notation.
\begin{align*}
|\underline i| &= i_0+\cdots+i_n
&
% ||\underline i|| &= i_1+2i_2+3i_3+\cdots + n i_n
% &
\underline i! = i_0!i_1! \cdots i_n!.
\end{align*}
We furthermore define a second multi-index 
\[
\lambda(\underline i)
=
(\lambda_1,\dots , \lambda_N)
=
(\underbrace{0,\dots,0}_{i_0\times},
\underbrace{1,\dots,1}_{i_1\times},
\dots,
\underbrace{n,\dots,n}_{i_n\times}),
\]
with $N=|\underline i|$.
For $\lambda=(\lambda_1,\dots, \lambda_N)$ a multi-index and $x_1,\dots,x_N$ variables we set 
\[
x^\lambda = x_1^{\lambda_1} \cdots x_N^{\lambda_N}.
\]

\subsection{Topological recursion relations for intersection numbers}
\label{sec:psi ints}
Consider the integrals of products of $\Psi$-classes over the Deligne-Mumford compactification of the moduli space of curves $\MM_{0,n}$,
\[
\int_{\MM_{0,n}} \Psi_1^{\lambda_1}\cdots \Psi_n^{\lambda_n} 
% \substack{\text{if $|\lambda|=n-3$}}{=}
= 
\frac{(n-3)!}{\prod_\alpha \lambda_\alpha!} \delta_{|\lambda|,(n-3)}.    
\]
Here $\Psi_j$ denotes the $\Psi$-class at the $j$-th marking, and for the equality see \cite[Lemma 1.5.1]{Kock}.
Following standard conventions we denote, for $\underline i$ a multi-index with $|\underline i|=n$, 
\[
    \langle \tau^{\underline i}  \rangle_0
    =
\langle \tau_0^{i_0}\tau_1^{i_1}\cdots  \rangle_0
:=
\int_{\MM_{0,n}}
\Psi^{\lambda(\underline i)}
=
\int_{\MM_{0,n}}
\prod_{\alpha=1}^n\Psi_\alpha^{\lambda_\alpha}
=
\frac{(n-3)!}{
    \prod_{\alpha=0}^n \lambda_\alpha!
}
\delta_{|\lambda(i)|,(n-3)}.
\]
The genus zero topological recursion formula (see \cite[equation (2.69)]{Witten} or \cite[section 3.5]{Kock}) is the following identity for the expressions
$\langle \tau^{\underline i}  \rangle_0$:
\begin{equation}
    \label{equ:top recur 1}
\frac1{\underline i!}
\langle \tau_{a+1}\tau_b\tau_c\tau^{\underline i}  \rangle_0
=
\sum_{\underline i' +\underline i'' =\underline i}
\frac{1}{\underline i'!\underline i''!}
\langle \tau_{a}\tau_0\tau^{\underline i'}  \rangle_0
\langle \tau_b\tau_c\tau_0\tau^{\underline i''}  \rangle_0.
\end{equation}
We shall use a slightly simplified and more symmetric variant.
\begin{lemma}\label{lem:top recur}
    For $j=(j_0,j_1,\dots)$ any multi-index such that $j_0\geq 1$ and any $a,b\geq 0$ we have that 
    \begin{equation}
        \label{equ:top recur 2}
    \frac1{\underline j!}
    \left(
    \langle \tau_{a+1}\tau_b\tau^{\underline j}  \rangle_0
    +
    \langle \tau_{a}\tau_{b+1}\tau^{\underline j}  \rangle_0
    \right)
    =
    \sum_{\underline j' +\underline j'' =\underline j}
    \frac{1}{\underline j'!\underline j''!}
    \langle \tau_{a}\tau_0\tau^{\underline j'}  \rangle_0
    \langle \tau_b\tau_0\tau^{\underline j''}  \rangle_0.
    \end{equation}
\end{lemma}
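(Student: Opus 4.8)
The plan is to obtain \eqref{equ:top recur 2} by applying the classical topological recursion relation \eqref{equ:top recur 1} twice and adding the results. The mechanism that makes this work is the hypothesis $j_0\geq 1$: it lets us peel off one of the $\tau_0$-insertions sitting inside $\tau^{\underline j}$ and use it as the third distinguished point, i.e.\ as the ``$\tau_c$'' of \eqref{equ:top recur 1} with $c=0$. Concretely I would write $\underline j=\underline i+\underline e_0$ with $\underline e_0=(1,0,0,\dots)$, so that $\tau_0\tau^{\underline i}=\tau^{\underline j}$ and $\underline i!=\underline j!/j_0$.

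First I would substitute $c=0$ and this $\underline i$ into \eqref{equ:top recur 1} and clear the denominator $\underline i!$, obtaining
\[
\frac{j_0}{\underline j!}\,\langle \tau_{a+1}\tau_b\tau^{\underline j}\rangle_0
=
\sum_{\underline i'+\underline i''=\underline i}
\frac{1}{\underline i'!\,\underline i''!}
\langle \tau_a\tau_0\tau^{\underline i'}\rangle_0\,
\langle \tau_b\tau_0\tau_0\tau^{\underline i''}\rangle_0 .
\]
Then I would absorb one of the two explicit $\tau_0$'s on the right into the multi-index of that factor, i.e.\ reindex by $\underline j'=\underline i'$ and $\underline j''=\underline i''+\underline e_0$; using $\underline i''!=\underline j''!/j''_0$ and that the terms with $j''_0=0$ carry a vanishing prefactor, this turns the above into
\[
\frac{j_0}{\underline j!}\,\langle \tau_{a+1}\tau_b\tau^{\underline j}\rangle_0
=
\sum_{\underline j'+\underline j''=\underline j}
\frac{j''_0}{\underline j'!\,\underline j''!}
\langle \tau_a\tau_0\tau^{\underline j'}\rangle_0\,
\langle \tau_b\tau_0\tau^{\underline j''}\rangle_0 .
\]
Running the identical argument but with the $\Psi$-class lowered at the marking of index $b+1$ rather than $a+1$ (legitimate because the correlators are symmetric under permutations of their insertions) gives the mirror identity
\[
\frac{j_0}{\underline j!}\,\langle \tau_a\tau_{b+1}\tau^{\underline j}\rangle_0
=
\sum_{\underline j'+\underline j''=\underline j}
\frac{j'_0}{\underline j'!\,\underline j''!}
\langle \tau_a\tau_0\tau^{\underline j'}\rangle_0\,
\langle \tau_b\tau_0\tau^{\underline j''}\rangle_0 .
\]

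Finally I would add the last two displays: on the right-hand side the coefficient becomes $j'_0+j''_0$, which is exactly $j_0$ for every decomposition $\underline j'+\underline j''=\underline j$, and dividing through by $j_0$ produces precisely \eqref{equ:top recur 2}. I do not expect a genuine obstacle here; the only things to watch are clerical, namely keeping the factorial normalizations $\underline i!$ versus $\underline j!$ (and $\underline i''!$ versus $\underline j''!$) correct through the reindexing, and checking that the unstable or wrong-dimension correlators that nominally appear in the sums are killed by the standard vanishing conventions already built into the formula for $\langle\tau^{\underline i}\rangle_0$ recalled above. If a fully self-contained proof were preferred, one could instead insert the closed formula $\langle\tau^{\underline m}\rangle_0=\frac{(|\underline m|-3)!}{\prod_k (k!)^{m_k}}\,\delta_{\sum_k k\,m_k,\,|\underline m|-3}$ into both sides of \eqref{equ:top recur 2} and verify the resulting multinomial identity directly, but the two-fold application of \eqref{equ:top recur 1} is shorter and more conceptual.
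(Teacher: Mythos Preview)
Your argument is correct and is essentially the same as the paper's own proof: both set $\underline j=\underline i+\underline e_0$, apply \eqref{equ:top recur 1} with $c=0$, absorb the extra $\tau_0$ into the multi-index (picking up the factor $j'_0$ or $j''_0$), and use $j'_0+j''_0=j_0$ to finish. The only cosmetic difference is that the paper symmetrizes in $a,b$ first and carries both terms through a single reindexing, whereas you write out the two applications separately and then add.
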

\begin{proof}
We start from \eqref{equ:top recur 1} with $c=0$ and the multi-index $\underline i$ defined such that 
\[
\underline j= (i_0+1,i_1,i_2,\dots).
\]
% by adding one to the first coordinate of $\underline i$.
Then, symmetrizing over $a$ and $b$ we find that 
\begin{align*}
    &\frac1{\underline j!}
    \left(
    \langle \tau_{a+1}\tau_b\tau^{\underline j}  \rangle_0
    +
    \langle \tau_{a}\tau_{b+1}\tau^{\underline j}  \rangle_0
    \right)
    = 
    \frac1{\underline j!}
    \left(
    \langle \tau_{a+1}\tau_b\tau_0\tau^{\underline i}  \rangle_0
    +
    \langle \tau_{a}\tau_{b+1}\tau_0\tau^{\underline i}  \rangle_0
    \right)
    \\&=
    \frac{1}{j_0}
    \sum_{\underline i' +\underline i'' =\underline i}
    \frac{1}{\underline i'!\underline i''!}
    \left(
    \langle \tau_{a}\tau_0\tau^{\underline i'}  \rangle_0
    \langle \tau_b\tau_0^2\tau^{\underline i''}  \rangle_0
    +
    \langle \tau_{a}\tau_0^2\tau^{\underline i'}  \rangle_0
    \langle \tau_b\tau_0\tau^{\underline i''}  \rangle_0
    \right)
    \\&=
    \frac{1}{j_0}
    \sum_{\underline j' +\underline j'' =\underline j}
    \langle \tau_{a}\tau_0\tau^{\underline j'}  \rangle_0
    \langle \tau_b\tau_0\tau^{\underline j''}  \rangle_0
    \frac{1}{\underline j'!\underline j''!}
    \left( j_0'' + j_0' \right)
    \\&=
    \sum_{\underline j' +\underline j'' =\underline j}
    \frac{1}{\underline j'!\underline j''!}
    \langle \tau_{a}\tau_0\tau^{\underline j'}  \rangle_0
    \langle \tau_b\tau_0\tau^{\underline j''}  \rangle_0.
\end{align*}
\end{proof}

\subsection{Cyclic operads}
We assume here that the reader has basic knowledge of cyclic and modular operads \cite{GK,GKcyclic}.
Notationally we follow the conventions of \cite[Section 2]{BrueckBorinskyWillwacher}, to which we refer for details.
A symmetric sequence $\POp$ is a collection $\POp((n))$ of right dg modules of the symmetric group $S_n$ for $n\geq 2$.
Informally, we think of $\POp((n))$ as a space of $n$-linear operations, and the $S_n$ action as permuting the inputs of these operations. 

Alternatively and equivalently, it is often convenient to define a symmetric sequence as a contravariant functor from the category of finite sets with bijections as morphisms to the category of dg vector spaces.
Then $\POp((n)):=\POp(\{1,\dots,n\})$ is the value of the functor on the set $\{1,\dots,n\}$, and conversely one may recover the functor on a set $A$ by setting
\[
\POp((A)) := \left(\bigoplus_{f: A\xrightarrow{\cong}\{1,\dots,n\}} \POp((n))  \right)_{S_rn},
\]
where the direct sum is over bijections from $A$ to $\{1,\dots,n\}$ and the symmetric group acts diagonally on the set of such bijections and on $\POp((n))$.
Intuitively, this corresponds to labeling the inputs of our operations by elements of $A$ rather than numbers.
We shall freely pass between both definitions of symmetric sequences and always use the most convenient.

A cyclic operad is a symmetric sequence $\POp$ together with binary composition operations
\[
  \circ_{a,b} : \cP((A))\otimes \POp((B))
  \to \POp((A\sqcup B \setminus \{a,b\})),
\]
and a unit element $1\in \POp((2))$.
The composition has to satisfy natural associativity and equivariance axioms, and the unit element behaves as the neutral element with respect to the composition.

A non-unital cyclic (peudo-)operad is the same data, but without the unit.
An augmented cyclic operad $\POp$ is a cyclic operad equipped with an augmentation morphism $\eta:\POp\to \mathbf{1}$, and the augmentation ideal 
\[
    \overline \POp 
    =
    \ker \eta    
\]
is a non-unital cyclic operad.
All our operads will be augmented without further mention. 

We also use the notion of 1-shifted cyclic operads, which are defined analogously to cyclic operads, except that the composition $\circ_{a,b}$ has cohomological degree $+1$.
Our notion of 1-shifted cyclic (or modular) operad corresponds to a $\kk$-cyclic (or modular) operad in \cite{GK} or other literature.

\subsection{Feynman transform and bar construction of cyclic operads}
The Feynman transform $\Feyn(\POp)$ is a version of the bar construction of a modular operad $\POp$.
In particular, $\Feyn(\POp)$ is a 1-shifted modular cooperad.
Again we do not fully recall the construction here for the sake of brevity, but refer to the literature \cite{GK}. Notationally we will again follow the conventions of \cite[Section 2]{BrueckBorinskyWillwacher}, except that we will use the symbols $\Bar$ and $\Bar^c$ for the bar and cobar construction of cyclic (co)operads, and not "$D$".

The quickest definition is that the Feynman transform of a modular operad $\POp$ is the cofree modular cooperad cogenerated by $\overline \POp$, with a differential encoding the modular operad structure on $\POp$.

For a cyclic operad $\POp$, that we may consider as a modular operad concentrated in genus zero, the cyclic bar construction $\Bar\POp$ is the genus zero part of the Feynman transform 
\[
    \Bar\POp ((r)) = \Feyn(\POp)((0,r)).    
\]
In particular, $\Bar\POp$ is a 1-shifted cyclic cooperad.
More precisely, it is the cofree 1-shifted coaugmented cyclic cooperad cogenerated by the augmentation ideal $\overline{\POp}$, equipped with a differential encoding the operad structure on $\POp$.
\[
    \Bar\POp = (\mathrm{Free}^c_\kk(\overline \POp), d)
\]

Dually, the cobar construction of a cyclic cooperad $\COp$
\[
    \Bar^c\COp = (\mathrm{Free}_\kk(\overline \COp), d)
\]
is the cofree 1-shifted augmented cyclic operad generated by $\overline \COp$, again with a differential encoding the cooperad structure on $\COp$.

The bar and cobar construction have natural 1-shifted variants.
For example, for $\COp$ a 1-shifted cyclic cooperad the cobar construction
\[
    \Bar^c\COp = (\mathrm{Free}(\overline \COp), d)
\]
is the free cyclic operad generated by $\overline\COp$ with a suitable differential.

\subsection{$\infty$-operads}
The notion of modular and cyclic $\infty$-operads has been studied in \cite{WardMassey}, generalizing the non-cyclic case \cite{vanderLaan}.
We just recall that a 1-shifted cyclic $\infty$-operad structure on $\POp$ is equivalent data to a degree 1 coderivation
\[
D : \mathrm{Free}^c(\overline \POp) \to \mathrm{Free}^c(\overline \POp)
\]
on the cofree coaugmented cyclic cooperad cogenerated by $\overline \POp$ satisfying $D^2=0$.
By cofreeness, any such coderivation $D$ is uniquely determined by its composition $\pi\circ D$ with the projection to cogenerators $\pi: \mathrm{Free}^c_\kk(\overline \POp) \to \POp$.
Hence the cyclic $\infty$-operad structure amounts to specifying composition operations 
\[
\mu_T : \otimes_T \overline \POp \to \overline \POp    
\]
for every non-rooted tree $T$.
We will use only cyclic $\infty$-operad structures with at most ternary operations, and the relevant trees are:
\begin{align*}
    &
\begin{tikzpicture}
    \node [ext] (v) at (0,0) {};
    \draw (v) edge +(-.5,.5) edge +(-.5,-.5) edge +(-.5,0)
    edge +(.5,.5) edge +(.5,-.5) edge +(.5,0);
\end{tikzpicture}
& &
\begin{tikzpicture}
    \node [ext] (v) at (0,0) {};
    \node [ext] (w) at (1,0) {};
    \draw (v) edge 
    node[above, near end] {$\scriptstyle j$}  
    node[above, near start] {$\scriptstyle i$} (w) 
    edge +(-.5,.5) edge +(-.5,-.5) edge +(-.5,0)
    (w) edge +(.5,.5) edge +(.5,-.5) edge +(.5,0);
\end{tikzpicture}
& &
\begin{tikzpicture}
    \node [ext] (v) at (0,0) {};
    \node [ext] (w) at (1,0) {};
    \node [ext] (u) at (2,0) {};
    \draw (v) edge 
    node[above, near end] {$\scriptstyle j$}  
    node[above, near start] {$\scriptstyle i$} (w) 
    edge +(-.5,.5) edge +(-.5,-.5) edge +(-.5,0)
    (w) edge node[above, near end] {$\scriptstyle l$}  
    node[above, near start] {$\scriptstyle k$} (u) 
    edge +(0,.5) edge +(0,-.5)
    (u) edge +(.5,.5) edge +(.5,-.5) edge +(.5,0);
\end{tikzpicture}
\end{align*}
The first corresponds to the differential $d$ on $\POp$, the second to the cyclic operadic composition $\mu_{i,j}$ and the third represents a ternary composition operation
\[
\mu_{i,j,k,l}: \POp((A\sqcup\{i\} )) \otimes \POp((B\sqcup\{j,k\})) \otimes Q\POp((C\sqcup\{l\}))
\to \POp((A\sqcup B\sqcup C))[1].
\]

For a 1-shifted cyclic $\infty$-operad $\POp$ we define its bar construction as 
\[
\Bar \POp = (\mathrm{Free}^c(\overline \POp) , D)
\]

An $\infty$-morphism between 1-shifted cyclic $\infty$-operads $\POp_1$ and $\POp_2$ is a morphism of coaugmented cyclic cooperads 
\[
F:  \Bar \POp_1 \to \Bar\POp_2.
\]
Again by cofreeness, the morphism $F$ is completely determined by its composition $\pi\circ F$ with the projection to cogenerators $\overline \POp_2$.
We say that the $\infty$-morphism $F$ is a quasi-isomorphism if the restriction 
\[
    \pi\circ F\mid_{\overline \POp_1} : \overline\POp_1
    \to 
    \overline \POp_2
\]
is a quasi-isomorphism of dg symmetric sequences.

% All the above notions have obvious analogs for ordinary (not 1-shifted) cyclic ($\infty$-operads).

For $\POp$ a 1-shifted cyclic $\infty$-operad there is always a canonical $\infty$-quasi-isomorphism
\[
\POp \to \Bar^c\Bar \POp    
\]
into the bar-cobar construction of $\POp$. It is given by the unit of the bar-cobar adjunction
\[
\Bar\POp \to \Bar\Bar^c\Bar \POp.   
\]
Since $\infty$-quasi-isomorphism can be inverted up to homotopy, there is also an $\infty$-quasi-isomorphism 
\[
\Bar^c\Bar \POp \to \POp,
\]
canonical up to homotopy. The object $\Bar^c\Bar \POp$ is an honest 1-shifted cyclic operad and is sometimes called the rectification of $\POp$. In particular any $\infty$-operad can be replaced by an $\infty$-quasi-isomorphic honest operad.

\subsection{The BV operad}
\label{sec:BV}
The Batalin-Vilkovisky operad $\BV$ is the homology operad of the framed little disks operad. 
It governs $\BV$-algebras, that is, graded vector spaces $V$ together with a graded commutative product $-\wedge-$, a bracket $[-,-]$ of degree $-1$ and a $\BV$ operator $\Delta:V\to V$ of degree $-1$ satisfying the following conditions:
\begin{itemize}
\item The product $\wedge$ makes $V$ into a graded commutative algebra.
\item The bracket $[-,-]$ is a Lie bracket of degree $-1$ on $V$.
\item There are the compatibility relations 
\begin{align*}
   \Delta^2&=0 & \Delta(x\wedge y)-(\Delta x)\wedge y - (-1)^{|x|}x\wedge (\Delta y)&=[x,y]\\
    [x,y\wedge z] &= [x,y]\wedge z + (-1)^{(|x|+1)|y|}y\wedge[x,z].
\end{align*}
\end{itemize}

For more details on the operad $\BV$ see \cite[Chapter 13.7]{LodayVallette}.

We declare a non-negative weight grading on the operad $\BV$ by defining the weight of a hmogeneous element to be minus twice the cohomological degree.

We will furthermore need an explicit basis of the part of weight $\leq 2$, $\gr_{\leq 2}\BV((n))$, introduced in \cite[section 6.2]{BrueckBorinskyWillwacher}, that we briefly recall.
The weight zero sub-operad $\gr_{0}\BV\cong \Com$ may be identified with the commutative operad $\Com$. In particular, $\gr_{0}\BV((n))=\Q c_n$ is one-dimensional, and we denote the generator by $c_n$, the $n-2$-fold commutative product.
The action of the symmetric group $S_n$ on $\gr_{0}\BV$ is trivial.

The space $\gr_2\BV((n))$ is $n\choose 2$-dimensional, and has a basis $(E_{ij})$ by symbols $E_{ij}=E_{ji}$ for $1\leq i\neq j\leq n$.
The action of the symmetric group $S_n$ on this basis is by permuting indices. The operadic compositions are as follows:

\begin{align*}
c_m \circ_{i,j} c_n &= c_{m+n} \\
E_{ij}\circ_{j,b} c_{A\sqcup b} &= \sum_{k\in A} E_{ik}.
\end{align*}

For example, the $\BV$ operator $\Delta\in \gr_2\BV((2))$ is expressed in this basis as $\Delta=E_{12}$

Dually, we have the dual basis $c_n^*, E_{ij}^*$ of the dual cooperad $\gr_{\leq 2}\BV((n))$.

\subsection{The gravity operad}
\label{sec:gravity def}
Consider the subspace 
\[
I_n = \mathrm{span} \{ x\circ_{i,1} \Delta | i=1,\dots,n;x\in \BV((n))\} \subset \BV((n))
\]
and the collection of degree shifted quotient spaces 
\[
\tGrav((n)) :=  \left(\BV((n)) / I_n\right) [6-2n].   
\]
This collection is a 1-shifted cyclic operad with the composition 
\[
[x]\circ_{i,j} [y] := [x\circ_{i,1}\Delta\circ_{2,j}y],    
\]
the gravity operad introduced in \cite{Getzler0}.
It is easy to check that 
\[
\tGrav((n)) \cong H_c^\bullet(\M_{0,n})
\]
can be identified with the compactly supported cohomology of the moduli space of genus zero curves.

% Alternatively, one may drop the bracket $[-,-]$, as this is expressible through the BV operator and product, and replace the conditions involving the bracket by the 7-term relation 
% \[
% \Delta (x\wedge y\wedge z)
% +
% (\Delta x) \wedge y \wedge z 
% +
% x \wedge (\Delta y) \wedge z
% +
% x \wedge y \wedge (\Delta z)
% =
% x\wedge \Delta (y\wedge z)
% +
% y\wedge \Delta (z\wedge x)
% +
% z\wedge \Delta (x\wedge y)
% \]
% (TODO: signs)

\section{The $Q$-construction of a cyclic operad}
\label{sec:Qdual}

For this section, let $\POp$ be any cyclic operad such that $\POp((2))\cong H_\bullet(S^1) = \BV((2))$.
Of course, our main example is $\POp=\BV$, but working more generally allows for using different models for $\BV$ than $\BV$ itself.
We denote the standard basis of $H_\bullet(S^1)$ by $\{1,\Delta\}$ as usual, with $\Delta$ of degree $-1$.

We then define the dg symmetric sequence 
\[
Q\POp((r)) := 
\begin{cases}
    (\POp((r))[6-2r] \otimes \Q[v_1,\dots,v_r], d_{\POp} + d_v) & \text{if $r\geq 3$} \\
    \Q[u][1] & \text{if $r=2$}  
\end{cases},
\]
where $v_j$ are formal variables of cohomological degree $-2$, $u$ is a formal variable of degree $+2$, and the differential acts such that 
\[
d_v (x\otimes p(v_1,\dots, v_r))
=
\sum_{j=1}^r
(-1)^{|x|} (x\otimes_j \Delta) \otimes \frac{\partial}{\partial v_j} p(v_1,\dots, v_r).
\]
The $S_r$-action on $\POp((r))$ for $r\geq 2$ extends naturally to an $S_r$-action on $Q\POp((r))$ -- in particular, permutations permute the indices of the variables $v_j$.
The $S_2$ action on $Q\POp((2))$ is defined such that the transposition $\tau \in S_2$ acts as $\tau 1=1$ and 
\begin{equation}\label{equ:tau u}
\tau u^n :=  
(-1)^{n+1} u^n
\end{equation}
for $n\geq 1$.

We define the following binary operadic composition of degree $+1$ on $Q\POp$. First suppose that $A$, $B$ are finite sets with $|A|,|B|\geq 3$. Then we set:
\begin{gather*}
\mu_{a,b} : Q\POp((A)) \otimes Q\POp((B))
\to Q\POp((A\sqcup B\setminus\{a,b\})) \\
\mu_{a,b}(x\otimes p , y\otimes q)
:=
(-1)^{|x|}(x \circ_{a,1} \Delta \circ_{2,b} y) \otimes p q\mid_{v_a=v_b=0}.
\end{gather*}
Here $\circ_{i,j}$ is the operadic composition in $\POp$, and the notation $p q\mid_{v_a=v_b=0}$ means that we set the variables $v_a$ and $v_b$ to zero in the polynomial $pq$.

Next, if one of the factors is of arity $|A|\geq 3$ and the other of arity $2$ we set 
\begin{gather*}
\mu_{a,1} : Q\POp((A)) \otimes Q\POp((2))
\to Q\POp((A))\\
\mu_{a,1}( x\otimes p, u^n )
= (-1)^{|x|} x\otimes \partial_{v_a}^n p.
\end{gather*}
Note that \eqref{equ:tau u} and compatibility with the symmetric group action then implies that 
\[
\mu_{2,a}(u^n, x\otimes p) = (-1)^{n+1}x\otimes \partial_{v_a}^np.
\]

Finally, if both factors are of arity $2$ we define the composition as minus the product
\begin{gather*}
    \mu_{2,1} : Q\POp((2)) \otimes Q\POp((2))
    \to Q\POp((2))\\
    \mu_{2,1}( p(u), q(u) )
    = - p(u)q(u).
\end{gather*}

Next, we define the ternary compositions
\begin{gather*}
    \mu_{a,b_1,b_2,c} : Q\POp((A)) \otimes Q\POp((B))\otimes Q\POp((C))
    \to Q\POp((A\sqcup B\sqcup C\setminus\{a,b_1,b_2,c\})).
\end{gather*}
Concretely, we set 
\[
    \mu_{a,b_1,b_2,c}\equiv 0  
\]
if $|B|\geq 3$ or $|A|=2$ or $|C|=2$.
In the remaining case $|A|,|C|\geq 3$, $|B|=2$ we define the ternary composition as 

\begin{gather*}
    \mu_{a,1,2,c} : Q\POp((A)) \otimes Q\POp((2)) \otimes Q\POp((C))
    \to Q\POp((A\sqcup C\setminus\{a,c\})) \\
    \mu_{a,b}(x\otimes p , u^n, y\otimes r)
    :=
     (x \circ_{a,c} y) \otimes \sum_{j=0}^{n-1} 
     (-1)^{|x|+n-1-j}
     \partial_{v_a}^jp\mid_{v_a=0} \partial_{v_c}^{n-1-j} r \mid_{v_c=0}.
\end{gather*}

\begin{prop}\label{prop:Q well defined}
    For $\POp$ a cyclic dg operad such that $\POp((2))=H_\bullet(S^1)$ the dg symmetric sequence $Q\POp$ with the binary and ternary compositions $\mu_{-,-}$, $\mu_{-,-,-,-}$ defined above is a well-defined cyclic 1-shifted homotopy operad.
\end{prop}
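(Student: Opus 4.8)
The plan is to verify directly that the coderivation $D$ on $\mathrm{Free}^c(\overline{Q\POp})$ determined by the differential $d_\POp + d_v$ together with the binary and ternary compositions $\mu_{-,-}$, $\mu_{-,-,-,-}$ squares to zero. Equivalently, one checks the defining identities of a $1$-shifted cyclic $\infty$-operad with at most ternary operations: (i) $(d_\POp+d_v)^2=0$ and compatibility of the differential with all $\mu$; (ii) the associativity/Jacobi-type identity for the binary $\mu_{-,-}$ up to the homotopy witnessed by $d$ applied to $\mu_{-,-,-,-}$; and (iii) the higher coherence relations among the ternary operations, which here degenerate because there are no $4$-ary operations. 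I will organize the verification according to the arities of the factors, since the formulas for $\mu$ are given by a case distinction on whether an input has arity $2$ or $\geq 3$.

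First I would dispatch the easy parts. That $d_v^2=0$ is immediate since $\Delta^2=0$ in $\POp((2))=H_\bullet(S^1)$ and the two $\partial_{v_j}$ commute; that $d_\POp$ and $d_v$ anticommute follows because $\circ_{j,-}\Delta$ is a chain map. Compatibility of $d$ with $\mu_{a,b}$ when $|A|,|B|\geq 3$ reduces to the facts that $\circ_{a,1}\Delta\circ_{2,b}$ is a chain map for $d_\POp$ and that the operation $p,q\mapsto pq|_{v_a=v_b=0}$ intertwines $d_v$ appropriately — the sign $(-1)^{|x|}$ is exactly what is needed here, and the terms where $d_v$ differentiates $v_a$ or $v_b$ vanish after the restriction, which is consistent because those are precisely the legs being glued. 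The mixed cases involving $Q\POp((2))=\Q[u][1]$ and the normalization $\tau u^n = (-1)^{n+1}u^n$ require bookkeeping of signs but no new idea: one checks that $\mu_{a,1}(-,u^n) = (-1)^{|x|}x\otimes\partial_{v_a}^n p$ is a chain map and is $S$-equivariant, using that $\partial_{v_a}$ lowers degree by $2$ and that $u$ has degree $+2$.

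The substantive part is the quadratic relation $D^2=0$ restricted to the corank-one part of the free cooperad, i.e.\ the identity saying that the two ways of composing three operations agree up to $d\mu_{-,-,-,-}$. Here the arity bookkeeping matters: the only configurations producing a nonzero ternary correction are those with a middle factor of arity $2$ and outer factors of arity $\geq 3$, matching the support of $\mu_{a,1,2,c}$. In this case, unwinding $\mu_{a,b}\bigl(\mu_{c,1}(x\otimes p,u^k)\otimes\cdots\bigr)$ and its counterparts produces, on the $\POp$-factor, iterated compositions $x\circ\Delta\circ\cdots$ that all collapse to a single $x\circ_{a,c}y$ (using $\Delta\circ_{2,1}\Delta = 0$ and $\POp((2))$-associativity), while on the polynomial factor one gets sums of products of the form $\partial_{v_a}^j p|_{v_a=0}\,\partial_{v_c}^{k-1-j}r|_{v_c=0}$; the claim is that the deficit between the two bracketings equals $d$ applied to $\mu_{a,1,2,c}(x\otimes p,u^k,y\otimes r)$. \textbf{This is the step where the topological recursion relation of Lemma~\ref{lem:top recur} enters}: after pairing against the intersection-number normalization implicit in the gravity-operad structure on $\tGrav((n))=H_c^\bullet(\M_{0,n})$ (which controls the $\POp$-factor via the identification $Q\POp((r))\simeq \BV((r))\sslash H_\bullet(S^1)^{\otimes r}$), the combinatorial identity among the coefficients $\frac{1}{\underline j!}$ is exactly \eqref{equ:top recur 2}, with the two terms on the left-hand side of \eqref{equ:top recur 2} corresponding to the two bracketings and the right-hand side corresponding to the $d_v$-boundary of the ternary term. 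I expect this to be the main obstacle: making the dictionary precise between the "$a$ versus $b$ symmetrization" in \eqref{equ:top recur 2} and the telescoping sum $\sum_{j=0}^{n-1}$ in the definition of $\mu_{a,1,2,c}$, and tracking all signs $(-1)^{|x|+n-1-j}$ through the cyclic (as opposed to rooted) conventions. Finally, since $Q\POp$ has no $4$-ary operations, the only remaining relations in $D^2=0$ are those involving one ternary and one binary operation sharing a vertex; these vanish automatically because $\mu_{-,-,-,-}$ is zero whenever its middle input has arity $\geq 3$ or an outer input has arity $2$, so no $4$-ary output can be produced, and the relevant component of $D^2$ is empty. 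Collecting these cases establishes $D^2=0$, which is the claim.
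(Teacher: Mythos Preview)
Your overall plan --- verify $D^2=0$ by checking the $\infty$-relations arity by arity --- is correct and matches the paper. However, there are two genuine errors.

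\textbf{The arity-3 relation does not use topological recursion.} You claim that the identity \eqref{equ:top recur 2} from Lemma~\ref{lem:top recur} is what makes the associator of $\mu_{-,-}$ equal to $d$ of the ternary operation. This is a confusion with a different part of the paper: the topological recursion relations are used only later, in showing that the map $F:\BV^*\to \Bar Q\BV$ is a dg map (Proposition~\ref{prop:F morphism} via Lemma~\ref{lem:poly top recur}), not here. In the present proposition the only nontrivial arity-3 case is $X=x\otimes p$, $Y=u^{n+1}$, $Z=z\otimes q$, and the $\POp$-factor comes out as $x\circ_{a,1}\Delta\circ_{2,c}z$ (not $x\circ_{a,c}z$ as you wrote). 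On the polynomial side, $d$ applied to the ternary term gives
\[
\sum_{j=0}^{n}(-1)^{n-j}\partial_{v_a}^j\partial_{v_c}^{n-j}(\partial_{v_a}+\partial_{v_c})pq\bigl|_{v_a=v_c=0},
\]
which telescopes to $(-\partial_{v_a}^{n+1}-(-1)^n\partial_{v_c}^{n+1})pq|_{v_a=v_c=0}$, matching the two iterated binary compositions. No intersection numbers, no gravity operad, no $\frac{1}{\underline j!}$ coefficients are involved.

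\textbf{The arity-4 and arity-5 relations do not vanish automatically.} Your last paragraph dismisses these by saying ``no $4$-ary output can be produced''. That is not the point: the relations at arity $k$ live over trees with $k$ vertices and involve only the operations already present (binary and ternary), not a putative $k$-ary operation. For the linear tree on four vertices with arguments $X,Y,Z,W$, the case $Y=u^{m+1}$, $Z=u^{n+1}$ binary and $X,W$ of arity $\geq 3$ gives three genuinely nonzero terms of the shape $\mu_{-,-,-,-}(\mu_{-,-}(X,Y),Z,W)$, $\mu_{-,-,-,-}(X,\mu_{-,-}(Y,Z),W)$, $\mu_{-,-,-,-}(X,Y,\mu_{-,-}(Z,W))$, and one must check that their polynomial parts cancel (they do, by another elementary reindexing of the double sum). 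Similarly, the linear five-vertex tree with $Y,V$ binary and $X,Z,W$ of arity $\geq 3$ produces two nonzero terms $\mu_{-,-,-,-}(X,Y,\mu_{-,-,-,-}(Z,V,W))$ and $\mu_{-,-,-,-}(\mu_{-,-,-,-}(X,Y,Z),V,W)$ that cancel. These are short computations, but they are not empty, and your argument as written would not detect them.
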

\begin{proof}
    We leave it to the reader to check that the differential and composition morphism are compatible with the symmetric group actions.

    We only check that the $\infty$-operadic relations are satisfied.
    Since our $\infty$-operad has only composition morphisms up to arity 3, the only non-trivial relations to be checked are of arities $\leq 5$.
    We need to check them in turn.
    \begin{itemize}
    \item $\infty$-relations of arity 1: This relation states that $d^2=0$, but this is obviously true since $\Delta^2=0$ and composition is compatible with the differential $d_{\POp}$.
    \item Arity 2: This relation states that 
    \[
    -d\mu_{a,b}(X,Y) = \mu_{a,b}(dX, Y) + (-1)^{|X|}\mu_{a,b}(X, dY),    
    \] 
    or in other words that the differential anti-commutes with the binary operadic composition.\footnote{Mind that the binary composition has degree $+1$ for 1-shifted operads, hence the sign is different than one would have for ordinary operads.}
    But one sees that the only terms that can potentially be non-trivial in that anti-commutator have a $\cP$-factor of $x\circ \Delta\circ \Delta \circ y$, and hence vanish since $\Delta^2=0$.
    \item Arity 3:
    We have to check that
    \begin{multline}\label{equ:Qwdtbs3}
       - (-1)^{|X|}
      \mu_{a,b_1}(X, \mu_{b_2, c}(Y, Z))
      -
      \mu_{b_2, c}(\mu_{a,b_1}(X, Y), Z)
      \\=
      d\mu_{a,b_1,b_2,c}(X,Y,Z)
      +
      \mu_{a,b_1,b_2,c}(dX,Y,Z)
      + (-1)^{|X|}
      \mu_{a,b_1,b_2,c}(X,dY,Z)
      + (-1)^{|X|+|Y|}
      \mu_{a,b_1,b_2,c}(X,Y,dZ) .
    \end{multline}
    We have to distinguish several cases according to which of the three arguments $X,Y,Z$ have valence $=2$ or $\geq 3$.
    If either all have valence $\geq 3$ or at most one has valence $\geq 3$, then the right-hand side of \eqref{equ:Qwdtbs3} vanishes. and it is easy to see that the left-hand side vanishes as well.
    So we focus on the case that two of the arguments have arities $\geq 3$ and one has arity $=2$. If either $X$ or $Z$ have arity $=2$, then again both sides are easily seen to vanish.
    
    This leaves us with the only non-trivial case that $Y=u^{n+1}\in\Q\cP((2))$, while $X=x\otimes p$, $Z=z\otimes q$ have higher arity.
    The right-hand side of \eqref{equ:Qwdtbs3} then reads
    \[
        (x\circ_{a,1} \Delta \circ_{2,c} z) \otimes
        \left(
            \sum_{j=0}^{n} (-1)^{n-j}\partial_{v_a}^j\partial_{v_c}^{n-j+1}(\partial_{v_a}+\partial_{v_c})  pq\mid_{v_a=v_c=0} 
        \right)
        =
        (x\circ_{a,0} \Delta \circ_{1,c} z) \otimes
        \left(
             (-\partial_{v_a}^{n+1}-(-1)^n\partial_{v_c}^{n+1})  pq\mid_{v_a=v_c=0} 
        \right)
    \]
    by telescopic cancellation.
    But this is the same as the left-hand side of \eqref{equ:Qwdtbs3}. 

    \item arity 4:
    There are a priori two types of arity 4 relations corresponding to the two trees with four vertices 
    \[
    \begin{tikzpicture}
        \node[ext] (v1) at (0,0) {$\scriptstyle 1$};
        \node[ext] (v2) at (.6,0) {$\scriptstyle 2$};
        \node[ext] (v3) at (1.2,0) {$\scriptstyle 3$};
        \node[ext] (v4) at (1.8,0) {$\scriptstyle 4$};
        \draw (v2) edge (v3) edge (v1) (v3) edge (v4);
    \end{tikzpicture}
    \quad
    \quad
    \quad
    \text{and}
    \quad
    \quad
    \quad
    \begin{tikzpicture}
        \node[ext] (v1) at (0,0) {$\scriptstyle 1$};
        \node[ext] (v2) at (.6,0) {$\scriptstyle 2$};
        \node[ext] (v3) at (1.2,0) {$\scriptstyle 3$};
        \node[ext] (v4) at (.6,0.6) {$\scriptstyle 4$};
        \draw (v2) edge (v3) edge (v1) edge (v4);
    \end{tikzpicture}
    \]
    However, by arity reasons $Q\cP$ has no compositions that would give a nontrivial contribution for the second type of tree, so that we only need to consider the first "linear" tree.
    The corresponding relation reads
    \begin{multline}\label{equ:Qwdtbs4}
        % \begin{multlined}
        (-1)^{|X|}\mu_{a,b_1}(X, \mu_{b_2, c_1,c_2,d}(Y, Z, W))
        +
        \mu_{b_2, c_1, c_2, d}(\mu_{a,b_1}(X, Y), Z, W)
        + (-1)^{|X|}
        \mu_{a,b_1, c_2, d}(X, \mu_{b_2, c_1}(Y, Z), W)
        \\+ (-1)^{|X|+|Y|}
        \mu_{a,b_1, b_2, c_1}(X, Y, \mu_{c_2, d}(Z, W))
        +
        \mu_{c_2,d}(\mu_{a, b_1, b_2, c_1}(X, Y, Z), W)
        =
        0.
        % \end{multlined}
      \end{multline}
      Again we need to consider in principle all possible $16$ combinations of $X,Y,Z,W$ being either of arity $2$ or of arity $\geq 3$. However, using symmetry, and since the ternary composition vanishes unless its middle argument is binary and the others are not, we can restrict to the following nontrivial sub-cases:
      \begin{itemize}
        \item $Y=u^m$ is binary and $X,Z,W$ are at least ternary: Here only the last two terms in \eqref{equ:Qwdtbs4} do not vanish and are identical (up to sign) by associativity of the operadic composition in $\cP$.
        \item $Y=u^m$ and $W=u^n$ are binary and $X,Z$ are at least ternary:
        Again the only non-vanishing terms are the last two, but they are again the same up to sign, since the ternary operation is defined independent of $v_{c_2}$ in the polynomial part of $Z$.
        \item $Y=u^{m+1}$ and $Z=u^{n+1}$ are binary and $X=x\otimes p,W=w\otimes q$ are at least ternary:
        This is the only really non-trivial case.
        Here the second, third and fourth terms of \eqref{equ:Qwdtbs4} non-zero and the equation reads:
        \begin{align*}
            &-(x\circ_{a, d}w) \otimes 
            \left(
                \sum_{j=0}^n 
                (-1)^{n-j}
                \partial_{v_a}^{m+1+j}
                \partial_{v_d}^{n-j}
                -
                \sum_{j=0}^{m+n+1}
                (-1)^{m+n+1-j} 
                \partial_{v_a}^{j}\partial_{v_d}^{m+n+1-j}
                +
                \sum_{j=0}^{m} 
                (-1)^{m+n+1-j}
                \partial_{v_a}^{j}\partial_{v_d}^{m+n+1-j}
            \right)
            pq\mid_{v_a=v_d=0}
            \\
            =&
            -(x\circ_{a, d}w) \otimes 
            \left(
                \sum_{j=m+1}^{n+m+1} 
                (-1)^{m+n+1-j}
                \partial_{v_a}^{j}
                \partial_{v_d}^{m+n+1-j}
                -
                \sum_{j=0}^{m+n+1}
                (-1)^{m+n+1-j} 
                \partial_{v_a}^{j}\partial_{v_d}^{m+n+1-j}
                +
                \sum_{j=0}^{m} 
                (-1)^{m+n+1-j}
                \partial_{v_a}^{j}\partial_{v_d}^{m+n+1-j}
            \right)
            pq\mid_{v_a=v_d=0}
            \\&=0
        \end{align*}
      \end{itemize}
      \item arity 5:
      Of the 3 possible trees with 5 vertices only the linear tree 
      \[
        \begin{tikzpicture}
            \node[ext] (v1) at (0,0) {$\scriptstyle 1$};
            \node[ext] (v2) at (.6,0) {$\scriptstyle 2$};
            \node[ext] (v3) at (1.2,0) {$\scriptstyle 3$};
            \node[ext] (v4) at (1.8,0) {$\scriptstyle 4$};
            \node[ext] (v5) at (2.4,0) {$\scriptstyle 5$};
            \draw (v2) edge (v3) edge (v1) (v4) edge (v3) edge (v5);
        \end{tikzpicture}
      \]
      contributes potentially non-trivial relations to be verified.
      The relation reads:
      \begin{multline}\label{equ:Qwdtbs5}
        (-1)^{|X|+|Y|}\mu_{a,b_1,b_2,c_1}(X, Y, \mu_{c_2, d_1,d_2,e}(Z, V, W))
        +(-1)^{|X|}
        \mu_{a,b_1,d_2,e}(X, \mu_{b_2, c_1,c_2,d_1}(Y, Z, V), W)
        \\+
        \mu_{c_2,d_1,d_2,e}(\mu_{a, b_1,b_2,c_1}(X, Y, Z), V, W)
        =
        0.
      \end{multline}
      The middle term is always zero, and the other terms can only contribute non-trivially if $Y=u^{m+1}$ and $V=u^{n+1}$ are binary and $X=x\otimes p$, $Z=z\otimes q$ and $W=w\otimes r$ are at least ternary.
      But then both the first and third term are equal (up to opposite sign) to
      \[
        (x\otimes_{a,c_1} z \otimes_{c_2,e} w)
        \otimes 
        \left(
            \sum_{i=0}^m\sum_{j=0}^n
            (-1)^{m+n-i-j}
            \partial_{v_a}^{i}\partial_{v_{c_1}}^{m-i}
            \partial_{v_{c_2}}^{j}\partial_{v_{e}}^{n-j}
            pqr\mid_{v_a=v_{c_1}=v_{c_2}=v_e=0}
        \right).
      \] 
    \end{itemize}

\end{proof}

\subsection{The weight grading on $Q\POp$}
\label{sec:Q weight}
We next suppose that $\POp$ comes equipped with an additional grading that we call the weight grading. We suppose that $\Delta\in \POp((2))$ is a homogeneous element of weight $2$.
We may then extend the weight grading to $Q\POp$ as follows.
We declare the generator $u\in Q\POp((2))$ to be of weight $W(u)=+2$.
For $n\geq 3$ let 
\[
x\otimes p\in Q\POp((n))=\POp((n))[6-2n]\otimes \Q[v_1,\dots,v_n]
\]
be an element such that $x$ is homogeneous of weight $W(x)$ in $\POp((n))$ and $p$ is a homogeneous polynomial of degree $\mathrm{deg}(p)$.
Then we define the weight of $x\otimes p$ in $Q\POp((n))$ as 
\[
W(x\otimes p) := 2n-6 - W(x) -2 \mathrm{deg}(p).   
\]
In other words, we reverse the weights of elements of $\POp$, and declare the symbols $v_j$ to carry weight $-2$ each.
The following is evident by inspecting the formulas.
\begin{lemma}
    The above weight grading on $Q\POp$ is compatible with the 1-shifted cyclic $\infty$-operad structure on $Q\POp$, in the sense that the structure operations (differential and compositions) all preserve the weight grading.
\end{lemma}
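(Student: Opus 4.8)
The plan is to verify directly, one structure operation at a time and straight from the definitions of Section~\ref{sec:Qdual}, that the differential and all composition morphisms of Proposition~\ref{prop:Q well defined} are homogeneous of weight $0$. The only facts needed are: (i) the weight grading makes $\POp$ a weight-graded dg operad, so $d_\POp$ preserves and operadic compositions add weights, and $W(\Delta)=2$ by hypothesis; (ii) composing an arity-$a$ and an arity-$b$ operation at one pair of legs produces an arity-$(a+b-2)$ operation, and the shift constants satisfy $2(a+b-2)-6=(2a-6)+(2b-6)+2$; (iii) on $Q\POp((r))$, $r\ge 3$, one has $W(x\otimes p)=2r-6-W(x)-2\deg(p)$, so $\partial_{v_j}$ raises the weight by $2$ (it drops $\deg$ by $1$) while attaching an extra $\Delta$ to the $\POp$-factor lowers it by $2$; (iv) setting a variable to $0$ in a homogeneous polynomial either kills it or leaves a homogeneous polynomial of the same degree. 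The degree shift $[6-2r]$ and the convention $W(u)=+2$ are exactly what make these constants cancel.

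I would then work through the operations in order. \emph{Differential.} On $Q\POp((2))=\Q[u][1]$ it vanishes; on $Q\POp((r))$, $r\ge 3$, the internal part $d_\POp$ preserves weight and does not touch the polynomial factor, while each summand of $d_v$ sends $x\otimes p$ to $(x\otimes_j\Delta)\otimes\partial_{v_j}p$, which by (i)--(iii) has the same weight. \emph{Binary compositions.} For $\mu_{a,b}$ with $|A|,|B|\ge 3$ the $\POp$-factor of the output is $x\circ_{a,1}\Delta\circ_{2,b}y$, of weight $W(x)+W(y)+2$, the polynomial factor $pq|_{v_a=v_b=0}$ has degree $\deg(p)+\deg(q)$ by (iv), and the output arity is $|A|+|B|-2$; substituting into the weight formula and using (ii) to absorb the weight of $\Delta$ into the shift constants gives precisely $W(x\otimes p)+W(y\otimes q)$. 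For $\mu_{a,1}(x\otimes p,u^n)=(-1)^{|x|}x\otimes\partial_{v_a}^n p$ the arity and $\POp$-factor are unchanged and $\deg$ drops by $n$, so the weight increases by $2n=W(u^n)$; the flipped version $\mu_{2,a}$ is identical, and $\mu_{2,1}(u^m,u^n)=-u^{m+n}$ is immediate. \emph{Ternary compositions.} All vanish except $\mu_{a,1,2,c}(x\otimes p,u^n,y\otimes r)$ with $|A|,|C|\ge 3$, where the $\POp$-factor $x\circ_{a,c}y$ has weight $W(x)+W(y)$ and arity $|A|+|C|-2$, and the key point is that every summand $(\partial_{v_a}^jp)|_{v_a=0}\cdot(\partial_{v_c}^{n-1-j}r)|_{v_c=0}$ is homogeneous of the same degree $\deg(p)+\deg(r)-(n-1)$, independent of $j$. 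Plugging this into the weight formula, the $+2$ by which the two $2r-6$ shift constants fail to be additive, together with the $2(n-1)$ produced by the $-(n-1)$ in the polynomial degree, reassemble into $2n=W(u^n)$, so the output weight equals the sum of the three input weights. Equivariance under the $S_r$-actions is clear, since permuting the variables $v_j$ preserves polynomial degree and $\tau u^n=\pm u^n$ preserves weight.

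There is no genuine obstacle -- the statement is, as remarked in the text, evident by inspecting the formulas -- but the step I would write out with the most care is the ternary composition, since there one must notice simultaneously that the polynomial part is homogeneous (all summands having equal degree $\deg(p)+\deg(r)-(n-1)$) and that the collection of additive constants (the two $2r-6$ shifts, the $+2$ from the arity change, the $2(n-1)$ from the decremented exponents, and $W(u^n)=2n$) cancels exactly.
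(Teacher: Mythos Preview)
Your proposal is correct and is precisely the detailed verification that the paper summarises in a single sentence (``evident by inspecting the formulas''); your case-by-case bookkeeping of arity shifts, polynomial degrees, and the $+2$ from $\Delta$ is exactly what that inspection amounts to.
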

% \hfill\qed

\subsection{Remark: $Q\BV$ and the gravity operad}
\label{sec:gravity}
Our main example will be the case $\POp=\BV$.
Let us first consider the non-unital 1-shifted cyclic $\infty$-suboperad $Q\BV_{\geq 3}\subset Q\BV$ of $Q\BV$ defined such that  
\[
    Q\BV_{\geq 3}((r))
=
\begin{cases}
Q\BV((r)) & \text{for $r\geq 3$} \\
% \Q 1 & \text{for $r=2$} \\
0 &\text{otherwise}
\end{cases}.
\]
Note that the ternary composition vanishes on $Q\BV_{\geq 3}$, so that $Q\BV_{\geq 3}$ is an honest (non-unital) 1-shifted cyclic operad.
Furthermore, one has the natural projection map 
\[
    Q\BV_{\geq 3} \to \tGrav
\]
to the gravity operad.
This is in fact a quasi-isomorphism and compatible with the operadic composition.

\section{The proof of Theorem \ref{thm:main1}}
\label{sec:proof1}
To show Theorem \ref{thm:main1} we first construct an explicit weight preserving quasi-isomorphism of coaugmented cyclic cooperads
\[
F : \BV^* \to \Bar Q\BV.
\]
% From this the desired result follows by applying $D$ again and using that $D^2\sim \mathit{id}$ and that $D$ preserves weak equivalences between objects of finite type in each arity and weight.

To construct the morphism $F$ it is sufficient to define the composition with the projection to cogenerators 
\[
f: \overline{\BV}^* \xrightarrow{F} \Bar Q\BV \xrightarrow{\pi} \overline { Q\BV}.    
\]
On the arity 2 part this morphism is defined by 
\begin{gather*}
    f: \overline\BV^*((2)) \cong  \Q \Delta^* \to  \overline{Q\BV}((2)) \cong u\Q[u] [1] \\
    f(\Delta^*) = u.
\end{gather*}
On the higher arity part the morphism $f$ factors through $\Com^*$.
\[
f : \BV^*((n)) \to \Com^*((n)) \xrightarrow{g} Q\BV((n)) \cong \BV((n))[6-2n]\otimes \Q[v_1,\dots,v_n].
\]
Here the second arrow $g$ is uniquely determined by specifying the image $g(c_n^*)$ of the cogenerator $c_n^*\in \Com^*((n))$. We define $f$ by setting 
\[
g(c_n^*) := c_n \otimes p_n,    
\]
with $p_n$ the symmetric polynomial 
\[
p_n 
=
(-1)^{n-3}
\sum_{
    \underline i=(i_0,i_1,\dots) \in \mathbb Z^n_{\geq 0} 
    \atop 
    {
    |\underline i|=n
    \atop 
    |\lambda(\underline i)|=n-3
    }
}
\sum_{\sigma \in S_n}
\frac{(n-3)!}{\underline i!}
\frac {\sigma \cdot v^{\lambda(\underline i)}}{\lambda(\underline i)!^2}
\in \Q[v_1,\dots,v_n]^{S_n},
\]
using the notation of section \ref{sec:multiind}.
For example:
\begin{align*}
p_3 &= 1 & p_4 &= -(v_1+v_2+v_3+v_4).
\end{align*}
It will be important to rewrite the coefficients in this definition in a more geometric way, using the notation of section \ref{sec:psi ints} as
% To this end we recall that the integrals of powers of $\Psi$-classes over the moduli space of curves $\MM_{0,n}$ are, following \cite{Kock}, 
% \[
% \langle \tau^{\underline i}  \rangle_0
% :=
% \int_{\MM_{0,n}}
% \Psi^{\lambda(\underline i)}
% :=
% \int_{\MM_{0,n}}
% \prod_{\alpha=1}^n\Psi_\alpha^{\lambda_\alpha}
% =
% \frac{(n-3)!}{
%     \prod_{\alpha=0^n \lambda_\alpha!}
% }
% \]
% In this way we may write 
\[
p_n 
=
(-1)^{n-3}
\sum_{
    \underline i=(i_0,i_1,\dots) \in \mathbb Z^n_{\geq 0} 
    \atop 
    {
    |\underline i|=n
    \atop 
    |\lambda(\underline i)|=n-3
    }
}
\sum_{\sigma \in S_n}
\frac{1}{\underline i!}
\frac {\sigma \cdot v^{\lambda(\underline i)}}{\lambda(\underline i)!}
\langle \tau^{\underline i}  \rangle_0
\]

\begin{prop}\label{prop:F morphism}
    The morphism $F: \BV^* \to DQ\BV$ defined above is a morphism of dg cyclic cooperads.
\end{prop}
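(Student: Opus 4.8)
The plan is to verify directly that $F$ intertwines the cooperadic differentials and the cyclic cooperad structure maps. Since $F$ is a morphism into the cofree coaugmented cyclic cooperad $\Bar Q\BV = (\mathrm{Free}^c(\overline{Q\BV}), D)$, it is determined by $f = \pi\circ F$, and the only conditions to check are: (i) $F$ is a chain map, i.e.\ $D\circ F = F\circ d_{\BV^*}$, which after projecting to cogenerators becomes a finite collection of identities involving $f$, the $\infty$-operad structure maps $\mu_{-,-}$ and $\mu_{-,-,-,-}$ on $Q\BV$, and the (co)operadic decomposition of $\BV^*$; and (ii) $F$ respects the cyclic cooperad comultiplication. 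Because $\Bar^c\BV^*$ (equivalently $\Bar Q\BV$) has a differential built only from arity $\leq 3$ pieces of the structure, and because $f$ is supported on the very simple classes $\Delta^*\mapsto u$ and $c_n^*\mapsto c_n\otimes p_n$, these identities reduce to a small number of explicit polynomial identities among the $p_n$ together with the derivative-and-restriction operations appearing in the definitions of $\mu_{a,b}$, $\mu_{a,1}$ and $\mu_{a,1,2,c}$.

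Concretely, I would proceed as follows. First I would unwind the differential on $\Bar^c\BV^*$ dual to the $\BV$-operad structure: on the cogenerators it is the sum of a piece dual to commutative composition $c_m\circ c_n = c_{m+n}$ and a piece dual to $E_{ij}\circ_{j,b} c = \sum_k E_{ik}$, i.e.\ essentially the splitting of a corolla into two corollas joined by an edge carrying a $\Delta$. Then I would match this, under $F$, to the two- and three-vertex composition operations on $Q\BV$: the binary $\mu_{a,b}$ contributes the $\Delta$-insertion with a $v_a=v_b=0$ restriction, the binary $\mu_{a,1}$ with a $u$-factor contributes $\partial_{v_a}^n$, and the ternary $\mu_{a,1,2,c}$ contributes the telescoping sum $\sum_{j}(-1)^{\cdots}\partial_{v_a}^j|_0\,\partial_{v_c}^{n-1-j}|_0$. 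Plugging in $f(c_n^*)=c_n\otimes p_n$ and $f(\Delta^*)=u$, the chain-map condition in arity $n$ becomes precisely a recursion expressing (an appropriate symmetrization of) $p_n$ in terms of products $p_{n'}\cdot p_{n''}$ after setting the glued variables to zero and applying the relevant $\partial_v$'s. My expectation is that, after the change of coefficients to the intersection-number normalization $\frac{1}{\underline i!}\frac{\sigma\cdot v^{\lambda(\underline i)}}{\lambda(\underline i)!}\langle\tau^{\underline i}\rangle_0$ already written in the excerpt, this recursion is exactly the symmetric topological recursion relation of Lemma~\ref{lem:top recur} (equation \eqref{equ:top recur 2}); the two terms on the left of \eqref{equ:top recur 2} correspond to the two ways of attaching the edge at the two ends, and the sum over $\underline j'+\underline j''=\underline j$ matches the cooperadic splitting. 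So the heart of the proof is: reduce the chain-map identity to \eqref{equ:top recur 2}, and separately check the low-arity base cases (arity $2$, $3$, $4$) and the $\Delta^*$-involving terms by hand — these are the ones where the $u$-variable, the sign \eqref{equ:tau u}, and the degree shifts $[6-2n]$ all enter, so signs must be tracked carefully.

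Finally I would check compatibility with the cyclic cooperad comultiplication (part (ii)). Here the key simplification is that $f$ factors through $\Com^*$ in arities $\geq 3$ and that $F$ is a cooperad morphism into a cofree object, so it suffices to see that $\pi\circ F$ is compatible with the generating co-composition; since $\BV^*\to\Com^*$ is a cooperad map and $g:\Com^*\to Q\BV$ sends $c_n^*$ to $c_n\otimes p_n$ with $p_n$ an $S_n$-symmetric polynomial, the check amounts to the symmetry and the behavior of $p_n$ under the one-edge splittings, which is again encoded by the same recursion. I would also note the weight bookkeeping: by the Lemma in section~\ref{sec:Q weight}, all structure maps on $Q\BV$ preserve weight, $f(\Delta^*)=u$ has weight $2$ matching $W(\Delta^*)$, and $g(c_n^*)=c_n\otimes p_n$ has weight $2n-6-0-2\deg p_n = 0$ since $\deg p_n = n-3$, matching $W(c_n^*)=0$; hence $F$ is automatically weight preserving once it is shown to be a cooperad morphism. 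The main obstacle, as indicated, is the sign- and normalization-bookkeeping needed to see that the arity-$n$ chain-map identity is literally \eqref{equ:top recur 2} rather than some twisted variant; everything else is routine unwinding of definitions.
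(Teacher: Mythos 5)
Your proposal is correct and follows essentially the same route as the paper: reduce to the projection to cogenerators, observe that $f$ is supported only on $\Delta^*$ and the $\Com^*$-part, and identify the resulting chain-map identity for the polynomials $p_n$ with the symmetrized genus-zero topological recursion relation of Lemma~\ref{lem:top recur}. The only organizational difference is that the paper sorts the nontrivial checks by the cohomological degree of the input $x\in\BV^*$ (degrees $0$ and $1$, both reducing to the same identity \eqref{equ:Ftbs4}) rather than by arity, but this does not change the substance of the argument.
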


The proof will rely on the following Lemma.
\begin{lemma}\label{lem:poly top recur}
Let $A=\{a_1,\dots,a_n\}$ be a finite set and write $p_A:=p_{n}(v_{a_1},\dots, v_{a_n})$.
Then we have for any $i\neq j\in A$
\begin{equation}\label{equ:lempoly}
    (\partial_{v_i}+\partial_{v_j}) p_A
    +
    \sum_{A_1\sqcup A_2 = A\atop
        i\in A_1, j\in A_2}
        p_{A_1\sqcup\alpha}p_{A_2\sqcup \beta}\mid_{v_\alpha=v_\beta=0}
    \, \,=0
\end{equation}
\end{lemma}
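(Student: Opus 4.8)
The plan is to reduce the polynomial identity \eqref{equ:lempoly} to the topological recursion relation of Lemma~\ref{lem:top recur}. First I would expand $p_A$ in the geometric form, so that the coefficient of a monomial $v^\lambda$ (with $|\lambda|=n-3$) is, up to the global sign $(-1)^{n-3}$ and symmetrization over $S_n$, essentially $\frac{1}{\underline i!\,\lambda!}\langle\tau^{\underline i}\rangle_0$ where $\lambda=\lambda(\underline i)$. The key bookkeeping observation is that in such a monomial the variable $v_k$ appears with some exponent $\lambda_k$, and the number of variables carrying exponent $0$ (i.e.\ not appearing) is exactly $i_0$; thus a one-variable derivative $\partial_{v_i}$ of $p_A$ corresponds combinatorially to lowering one exponent by one — which is precisely the operation relating $\langle\tau_{a+1}\tau^{\underline j}\rangle_0$-type brackets to $\langle\tau_a\tau^{\underline j}\rangle_0$-type brackets, once one tracks the $\tau_0$-insertions coming from the "absent" variables.

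Next I would unpack the product term $\sum_{A_1\sqcup A_2=A,\,i\in A_1,\,j\in A_2}p_{A_1\sqcup\alpha}p_{A_2\sqcup\beta}\mid_{v_\alpha=v_\beta=0}$. Setting $v_\alpha=v_\beta=0$ means we only keep the monomials in $p_{A_1\sqcup\alpha}$ that do not involve $v_\alpha$, i.e.\ those with $\alpha$ among the "absent" markings — so the gluing marking $\alpha$ contributes precisely a $\tau_0$-factor, and likewise for $\beta$. Hence, comparing the coefficient of a fixed monomial $v^\lambda$ on both sides of \eqref{equ:lempoly}, the product term produces exactly a sum over splittings $\underline j'+\underline j''=\underline j$ of products $\langle\tau_0\tau^{\underline j'}\cdots\rangle_0\langle\tau_0\tau^{\underline j''}\cdots\rangle_0$, matching the right-hand side of \eqref{equ:top recur 2}, while the derivative term $(\partial_{v_i}+\partial_{v_j})p_A$ produces the symmetrized sum $\langle\tau_{a+1}\tau_b\tau^{\underline j}\rangle_0+\langle\tau_a\tau_{b+1}\tau^{\underline j}\rangle_0$ on the left-hand side, where $a,b$ are the exponents that $v_i,v_j$ carry in the relevant monomial of $p_A$ before differentiation. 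The factorials $\underline i!$ and $\lambda!$ in the definition of $p_n$ are arranged exactly so that differentiation in $v_i$ turns $\frac{v_i^{a+1}}{(a+1)!}\cdot\frac{1}{i_{a+1}!}$-type coefficients into the $\frac{v_i^a}{a!}\cdot\frac{1}{\cdots}$-type coefficients with the correct multiplicities; checking this matching of combinatorial factors is the routine but slightly delicate core of the argument.

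The main obstacle I anticipate is the bookkeeping of the two layers of symmetrization (over $S_n$ on $A$, and over the splitting $A_1\sqcup A_2$) against the purely index-based symmetrization implicit in the $\langle\tau^{\underline i}\rangle_0$ notation, together with keeping the signs straight: the global $(-1)^{n-3}$ in $p_n$ must interact correctly with the $(-1)^{|A_1|-3}(-1)^{|A_2|-3}$ coming from the two factors and the fact that $|A_1|+|A_2|=n+2$, so that $(-1)^{n-3}$ versus $(-1)^{(|A_1|-3)+(|A_2|-3)}=(-1)^{n-4}$ forces the relative sign between the derivative term and the product term to be exactly the $+$ appearing in \eqref{equ:lempoly}. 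Once the dictionary between monomials of $p_A$ and intersection numbers $\langle\tau^{\underline i}\rangle_0$ is set up carefully, the identity \eqref{equ:lempoly} becomes \eqref{equ:top recur 2} summed over the appropriate distribution of the remaining markings, and the proof concludes by invoking Lemma~\ref{lem:top recur}. I would also record the degenerate low-arity cases ($n=3$, where $p_3=1$ and the identity is immediate, and $n=4$) as sanity checks.
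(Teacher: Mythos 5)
Your proposal is correct and follows essentially the same route as the paper: extract the coefficient of a fixed monomial $v_i^a v_j^b v^{\lambda(\underline j)}$ from both terms of \eqref{equ:lempoly}, observe that the restriction $v_\alpha=v_\beta=0$ turns the gluing markings into $\tau_0$-insertions, and reduce the identity to the symmetrized topological recursion relation \eqref{equ:top recur 2} of Lemma \ref{lem:top recur}. Your sign bookkeeping ($(-1)^{n-3}$ against $(-1)^{n-4}$ from the two factors) is also the correct reason the relative sign in \eqref{equ:lempoly} is $+$.
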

\begin{proof}
% We may assume without loss of generality that $a=1$, $b=2$ and $A=B\sqcup \{1,2\}$.
Extracting the coefficient of $v_i^av_j^bv^{\lambda(\underline j)}$ from both terms of \eqref{equ:lempoly}, it suffices to show that
\[
    \frac{1}{\underline j!}
    \langle (\tau_{a+1}\tau_b+\tau_a\tau_{b+1})\tau^{\underline j}  \rangle_0
    =
    \sum_{\underline j'+\underline j'' = j}
    \frac{1}{\underline j'!}
    \frac{1}{\underline j''!}
    \langle \tau_{a}\tau_0\tau^{\underline j'}  \rangle_0
    \langle \tau_b\tau_0\tau^{\underline j''}  \rangle_0.
\]
But this equation is the topological recursion relation for intersection numbers in the form \eqref{equ:top recur 2}.
\end{proof}

\begin{proof}[Proof of Proposition \ref{prop:F morphism}]
By construction it is clear that $F$ is compatible with the cyclic cooperad structures. We just need to check that $F$ is compatible with the differential, that is 
\[
dF(x) =0    
\]
for all $x\in \BV^*$.
As usual, it suffices to check the composition with the projection to cogenerators 
\begin{equation}\label{equ:Ftbs1}
\pi(dF(x))=0.    
\end{equation}
Furthermore, this holds obviously for $x\in \BV^*((2))$, so we restrict our attention to $x$ of higher arity $\geq 3$. 

Then equation \eqref{equ:Ftbs1} is equivalent to showing that
\begin{equation}\label{equ:Ftbs2}
df(x)
+ \frac1{2}
\sum \mu_{\alpha,\beta}(f(x') , f(x''))
 +\frac1{2}
\sum \mu_{\alpha,\beta_1,\beta_2,\gamma}(f(x') , f(x''), f(x''')) =0.
\end{equation}
Here we used Sweedler notation for the reduced cooperadic cocompositions
\begin{gather*}
    \overline{\BV}^*((A)) \to \oplus_{A=A_1\sqcup A_2} \overline{\BV}^*((A_1 \sqcup \{\alpha\})) \otimes \overline{\BV}^*((A_2 \sqcup \{\beta\})) \\
x\mapsto  \sum x'\otimes x''
\end{gather*}
and  
\begin{gather*}
    \overline{\BV}^*((A)) \to \oplus_{A=A_1\sqcup A_2\sqcup A_3} \overline{\BV}^*((A_1 \sqcup \{\alpha\})) \otimes \overline{\BV}^*((A_2 \sqcup \{\beta_1,\beta_2\}))\otimes \overline{\BV}^*((A_3 \sqcup \{\gamma\})) \\
    x\mapsto  \sum x'\otimes x'' \otimes x''',
\end{gather*}
with $\alpha,\beta,\beta_1,\beta_2,\gamma$ some symbols not in the set $A$. The factors $\frac12$ in \eqref{equ:Ftbs2} correct for the fact that our cocompositions, as defined above, pick up every partition of the set $A$ twice, while it should appear only once.

Next, note that $f(y)=0$ on all $y\in \BV^*$ of degree $\geq 2$.
Hence we need to check \eqref{equ:Ftbs2} only on $x$ of degrees $\leq 3$ by degree reasons.
Furthermore, for $|y|=1$, $f(y)$ can only be nonzero if $y$ is of arity $2$.

But then for $x$ of degree 3 only the last term in \eqref{equ:Ftbs2} could be non-zero, and it does not contribute because the ternary composition is only nonzero if the two outer arguments are of arity $\geq 3$.

Similarly, suppose that $x$ in \eqref{equ:Ftbs2} is of degree 2. Then two of $f(x'), f(x''), f(x')$ need to be of arity two, whence the third term cannot contribute.
The second can also not contribute, because $f(x')$ and  $f(x'')$ both would need to be of arity two, and hence also $x$, but there is no element $x\in\BV((2))$ of degree 2 to start with.

So we are left with the nontrivial cases of $x$ being of degree 0 or 1.
Suppose first that $x\in \BV^*((A))$ is of degree zero, with $|A|=:r\geq 3$. I.e., without loss of generality $x=c_A^*$ is identified with the $\Com^*$-cogenerator inside $\BV^*$.
Then the third term of \eqref{equ:Ftbs2} vanishes by degree reasons and \eqref{equ:Ftbs2} becomes
\begin{equation}\label{equ:Ftbs3}
\begin{aligned}
0&=d (c_A \otimes p_A)
+\frac12
\sum_{A_1\sqcup A_2=A} \mu_{\alpha,\beta}(c_{A_1\sqcup\alpha}\otimes p_{A_1\sqcup \alpha}, c_{A_2\sqcup \beta}\otimes p_{A_2\sqcup \beta} )
% +
% \sum_{b\in A} c_A\circ_{b, 0}\Delta \otimes (r-3)(\sum_{a\in A} v_a)^{r-4}
\\
&=d (c_A \otimes p_A)+\frac12
\sum_{A_1\sqcup A_2=A}
c_{A_1\sqcup\alpha}\circ_{\alpha, 1}\Delta\circ_{2,\beta}c_{A_2\sqcup \beta}
\otimes p_{A_1\sqcup \alpha}p_{A_2\sqcup \beta}\mid_{x_\alpha=x_\beta=0}.
\end{aligned}
\end{equation}
Now, using the basis $(E_{ij})_{ij}$ of the degree $-1$ part of $\BV$ of section \ref{sec:BV} we have that 
\[
d (c_A \otimes p_A)
=
\sum_{a\neq b\in A} E_{ab} \otimes \partial_a p_A
=
\frac 12 \sum_{a\neq b\in A} E_{ab} \otimes (\partial_a+\partial_b) p_A
\]
and 
\[
    c_{A_1\sqcup\alpha}\circ_{\alpha, 1}\Delta\circ_{2,\beta}c_{A_2\sqcup \beta}
    =
    \sum_{a\in A_1 \atop b\in A_2}
    E_{ab}. 
\]
Hence, collecting the coefficient of $E_{ab}$ in \eqref{equ:Ftbs3}, we find that \eqref{equ:Ftbs3} holds if and only if for all $a\neq b\in A$ we have 
\begin{equation}\label{equ:Ftbs4}
(\partial_a+\partial_b) p_A
=
\sum_{A_1\sqcup A_2 = A\atop
    a\in A_1, b\in A_2}
    p_{A_1\sqcup\alpha}p_{A_2\sqcup \beta}\mid_{x_\alpha=x_\beta=0}.
\end{equation}
But this is precisely Lemma \ref{lem:poly top recur}.

Finally we consider $x\in \BV^*((A))$ of degree 1, i.e., $x=E_{ij}^*$ for some $i\neq j\in A$.
% We may assume that $x=E_{ij}$, using the basis of the degree 1 part of $\BV^*((A))$ describe in \cite[section XX]{BrueckBorinskyWillwacher}.
The first term of \eqref{equ:Ftbs2} then vanishes.
Note also that $f(y)=0$ for all $y$ of degree $\geq 1$ and arity $\geq 3$. Hence the second term of \eqref{equ:Ftbs2} can only contribute if $x'$ or $x''$ are of arity $2$ and degree $1$. Furthemore, there are only two cocompositions that produce such terms from $E_{ij}^*$, so that the second term of \eqref{equ:Ftbs2} becomes
\[
c_A  \otimes (\partial_i+\partial_j) p_A.
\]
Similarly, the third term of \eqref{equ:Ftbs2} can only contribute if $x''$ is of arity two and degree one, and the only possible cocompositions are such that $i$ is appearing in $x'$ and $j$ in $x'''$ or vice versa.
The third term of \eqref{equ:Ftbs2} is hence equal to 
\[
    \sum_{A_1\sqcup A_2 = A\atop
    a\in A_1, b\in A_2}
    p_{A_1\sqcup\alpha}p_{A_2\sqcup \beta}\mid_{x_\alpha=x_\beta=0}.
\]

But this means that \eqref{equ:Ftbs2} is equivalent to \eqref{equ:Ftbs4}, and we are hence done by Lemma \ref{lem:poly top recur}.
% \[
% \sum_{k\in A} c_A \circ_{i,0} \Delta \otimes \partial_i p_A
% +
% \sum_{}
% =0    
% \]

\end{proof}

\begin{proof}[Proof of Theorem \ref{thm:main1}]
    Applying the cobar construction to the map $F$ of Proposition \ref{prop:F morphism} yields a morphism of cyclic operads
    \[
    \Bar^c F : \Bar^c\BV^* \to \Bar^cQ\BV.     
    \]
The cyclic operadic $\infty$-morphism $\Bar^c\BV^*\to Q\BV$ of Theorem \ref{thm:main1} is the composition
\[
    \Bar^c\BV^* \xrightarrow{DF} \Bar^c \Bar Q\BV \xrightarrow{\sim} Q\BV
\]
of $\Bar^cF$ with the (up to homotopy) canonical cyclic operadic $\infty$-quasi-isomorphism $\Bar^c\Bar Q\BV \to Q\BV$.
We claim that the above composition is a quasi-isomorphism.
To this end, let us endow $H(\Bar^c\BV^*)$ with a 1-shifted cyclic $\infty$-operad structure by homotopy transfer from $\Bar^c\BV^*$. Consider the $\infty$-morphism 
\[
    G:H(\Bar^c\BV^*)\to Q\BV    
\]
that is the composition 
\[
    H(\Bar^c\BV^*)\xrightarrow{\sim} \Bar^c\BV^* \xrightarrow{\Bar^cF} \Bar^c\Bar Q\BV \xrightarrow{\sim} Q\BV,
\]
with the left-most $\infty$-quasi-isomorphism obtained as part of the homotopy transfer.
The morphism $G$ also preserves the weight gradings, provided suitable choices in the homotopy transfer.
We have to check that the linear part $G_1$ of $G$ induces an isomorphism on cohomology 
\[
[G_1] : H(\Bar^c\BV^*) \to H(Q\BV).
\]
Note that by \cite[Theorem 2.21 and Proposition 3.9]{DCV} we have that
\[
    H(\Bar^c\BV^*((n)))  
    =
    \begin{cases}
    \Q[u][1] & \text{for $n=2$} \\
    H_c^\bullet(\M_{0,n}) & \text{for $n\geq 3$}
    \end{cases}.
\]
Hence essentially by the definition of $Q\BV$ we have that $H(\Bar^c\BV^*((n))) \cong H(Q\BV)$.
We next check that $[G_1]$ is an isomorphism in arities $n=2,3$:
\begin{itemize}
\item $n=2$: $H(\Bar^c\BV^*((2)))\cong \Q[u][1]\cong H(Q\BV((2)))$ is a 1-shifted algebra generated by the single element $u$. 
But tracing the generator $u$ through the composition defining $G$, one sees that $u$ is mapped to the generator of $H(Q\BV((2)))$. (The intermediate elements in the bar/cobar constructions are always trivial trees with one vertex.) Hence $[G_1]$ is an isomorphism in arity $2$.
\item $n=3$: $H(\Bar^c\BV^*((3)))\cong \Q \cong H(Q\BV((3)))$
is a one-dimensional space concentrated in degree zero. The representing cocycle in $D\BV^*((3))$ is a tree with one vertex decorated by the cocommutative coproduct generator $c_3^*\in \BV^*((3))$.
On the other side, the representing cocycle in $Q\BV((3))$ is the element $c_3\otimes 1$, with $c_3\in \BV((3))$ the commutative product generator.
Tracing through the composition of morphisms above, we see that the generators are mapped onto each other, so that $[G_1]$ is an isomorphism in arity $3$.
\end{itemize}

By finite dimensionality, it suffices to show that $[G_1]$ is surjective in arities $\geq 3$. But $[G_1]$ is also a morphism of operads. The operadic composition on 
$H(Q\BV_{\geq 3})\simeq H_c^\bullet(\M_{0,-})\cong \tGrav$ agrees with the operadic composition of the gravity operad, see section \ref{sec:gravity}. But Getzler \cite{Getzler0} has shown that the gravity operad is generated by its top weight parts $\gr_{2n-6} H_c^{2n-6}(\M_{0,n})\cong \Q$.
Hence it is sufficient to check that the map $[G_1]$ is surjective onto the parts of top weight ${2n-6}$.

We do this by induction on $n$. The base case $n=3$ has been checked above.
For the step $n\to n+1$ first note that the top weight part of $Q\BV((n))$ is in fact 1-dimensional, generated by the element $c_n\otimes 1$.
As the induction hypothesis we may hence assume that there is an element $x_n\in H(\Bar^c\BV^*((n)))$ so that 
\[
 G_1(x_n) = c_n\otimes 1, 
\]
and we want to construct $x_{n+1}$ satisfying $G_1(x_{n+1}) = c_{n+1}\otimes 1$.
Let $\mu'$ be the $\infty$-operadic structure on $H(\Bar^c\BV^*)$. Then we define 
\[
x_{n+1} := \mu'_{1,1,2,1}(x_n,u,x_3)
\]
as the ternary composition. Then by the relations satisfied by an $\infty$-morphism of cyclic operads we have that 
\begin{multline*}
G_1(x_{n+1}) - 
\mu_{1,1,2,1}(G_1(x_n), G_1(u), G_1(x_3))
\\=
\pm G_2(\mu'_{1,1}(x_n,u), x_3)
\pm G_2(x_n, \mu'_{2,1}(u,x_3))
\pm \mu_{2,1}(G_2(x_n,u), x_3)
\pm \mu_{1,1}(x_n, G_2(u,x_3))
\pm d G_3(x_n, u, x_3),
\end{multline*}
using the slightly abusive notation $G_k$ for the $k$-linear part of the $\infty$-morphism $G$.
In any case, the right-hand side vanishes by weight and degree reasons: $\mu'_{1,1}(x_n,u)$ is an element of $H(\Bar^c\BV^*((n)))$ of weight $2n-6+2$, but the top weight is $2n-6$ in arity $n$, and hence $\mu'_{1,1}(x_n,u)=0$. The same reasoning applies to $\mu'_{2,1}(u,x_3)=0$. Similarly, $Q\BV((r))$ is concentrated in weights $\leq 2n-6$, and the top weight part is one-dimensional. Hence $G_2(x_n,u)$ and $G_2(u,x_3)$ are of top$+2$-weight and thus zero, and the differential on the top weight part vanishes, so that also $d G_3(x_n, u, x_3)=0$.
Thus we have shown that 
\[
    G_1(x_{n+1}) = \mu_{1,1,2,1}(G_1(x_n), G_1(u), G_1(x_3))
    =
    \mu_{1,1,2,1}(c_n\otimes 1, u, c_3\otimes 1)
    =
    c_{n+1}\otimes 1
\]
as desired. It follows that $G$ is a quasi-isomorphism, and hence Theorem \ref{thm:main1} is proven. 

% Remark: We are using here that $H(Q\BV)=H(D\BV)$ is generated by the two elements $u$ and $c_3otimes 1$ under the operadic structure and the operadic version of the ternary Massey products.

% Hence to show Theorem \ref{thm:main 1} it suffices to show that $DF$ is a quasi-isomorphism. But this is true by Proposition \ref{prop:F qiso} since the cobar construction preserves quasi-isomorphisms. (TODO: add a Lemma stating that...)

% upper horizontal morphism in the commutative diagram of cyclic cooperad morphisms
% \[
%     \begin{tikzcd}
%  D D\BV^* \ar{dr}{\sim} \ar{rr} & & D Q\BV \\
%  & \BV^* \ar{ur}{F} &
%  \end{tikzcd}.
% \]
% Here the morphism $F$ of Proposition \ref{prop:F morphism} is a quasi-isomorphism by Proposition \ref{prop:F qiso}.
% The left-hand diagonal morphism 
\end{proof}

As a Corollary, we also obtain:
\begin{prop}\label{prop:F qiso}
    The map $F:\BV^*\to \Bar Q\BV$ of Proposition \ref{prop:F morphism} is a quasi-isomorphism.
\end{prop}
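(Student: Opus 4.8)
The plan is to deduce this formally from Theorem~\ref{thm:main1}, with no further computation. The first step is to observe that the proof of Theorem~\ref{thm:main1} already gives that the cobar map
\[
\Bar^c F\colon \Bar^c\BV^* \longrightarrow \Bar^c\Bar Q\BV
\]
is a quasi-isomorphism: that proof factors the cyclic $\infty$-quasi-isomorphism $\Bar^c\BV^*\to Q\BV$ as $\Bar^c\BV^*\xrightarrow{\Bar^cF}\Bar^c\Bar Q\BV\xrightarrow{\sim}Q\BV$, and since the second map is a quasi-isomorphism, two-out-of-three forces $\Bar^cF$ to be one as well.

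The second step is to push this back across the cobar functor. I would invoke the standard fact that, on coaugmented conilpotent cyclic cooperads, $\Bar^c$ reflects quasi-isomorphisms. This applies here because $\BV^*$ is conilpotent --- it is the dual of the augmented, finite-type cyclic operad $\BV$ whose weight-zero part is $\Com$, so in each fixed arity every element of $\overline{\BV}^*$ is supported on a bounded set of trees --- while $\Bar Q\BV=(\mathrm{Free}^c(\overline{Q\BV}),D)$ is conilpotent because it is cofree as a graded cyclic cooperad; since $F$ is moreover weight preserving, the relevant filtrations are respected throughout. The reflection statement itself is pure homotopical algebra: for conilpotent $\mathcal C$ the unit $\mathcal C\to\Bar\Bar^c\mathcal C$ of the bar--cobar adjunction is a quasi-isomorphism and $\Bar$ preserves quasi-isomorphisms, so applying $\Bar$ to $\Bar^cF$ and comparing with $F$ via the naturality square of this unit yields that $F$ is a quasi-isomorphism.

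The only point that I expect to need care is foundational rather than computational: one must have in hand the homotopy theory of $1$-shifted cyclic cooperads --- conilpotency of $\BV^*$ and $\Bar Q\BV$, homotopy invariance of $\Bar$ and $\Bar^c$, and the fact that $\mathcal C\to\Bar\Bar^c\mathcal C$ is a resolution --- which can be arranged using the conventions of \cite{GKcyclic,WardMassey} together with reduction to the non-cyclic case. If one prefers to avoid citing that $\Bar^c$ reflects quasi-isomorphisms, an equivalent route is available: the $\infty$-quasi-isomorphism $\Bar^c\BV^*\to Q\BV$ is, read as a map of dg cyclic cooperads, a quasi-isomorphism $\Phi\colon\Bar\Bar^c\BV^*\to\Bar Q\BV$, and precomposing $\Phi$ with the bar--cobar resolution $\BV^*\xrightarrow{\sim}\Bar\Bar^c\BV^*$ gives a map homotopic to $F$ (by naturality of the unit together with a triangle identity); since homotopic maps of dg cooperads agree on cohomology, $F$ is a quasi-isomorphism.
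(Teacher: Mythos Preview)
Your proposal is correct and is essentially the paper's own argument. Both extract from the proof of Theorem~\ref{thm:main1} that $\Bar^cF$ is a quasi-isomorphism (you make the two-out-of-three step explicit), then apply $\Bar$ and use the naturality square of the unit $\mathcal C\to \Bar\Bar^c\mathcal C$ together with the fact that $\Bar$ preserves quasi-isomorphisms; your phrasing ``$\Bar^c$ reflects quasi-isomorphisms'' just packages this square, and the paper writes out that same square directly.
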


\begin{proof}%[Proof of Proposition \ref{prop:F qiso}]

    Consider the commutative diagram of cyclic cooperads
    \[
        \begin{tikzcd}
            \Bar \Bar^c\BV^* \ar{r}{\Bar\Bar^cF}[below]{\sim} & \Bar\Bar^c\Bar Q\BV \\
     \BV^*  \ar{u}{\sim}\ar{r}{F} & \Bar Q\BV\ar{u}{\sim}
     \end{tikzcd}.
    \]
    The vertical arrows are the units of the bar-cobar adjunction and are known to be quasi-isomorphisms.
    We know from the previous proof of Theorem \ref{thm:main1} that the morphism $\Bar^c F$ is a quasi-isomorphism. But since the bar construction preserves quasi-isomrophisms, $\Bar\Bar^cF$ is also a quasi-isomorphism. Hence $F$ is also a quasi-isomorphism.
    
    % By the preceding proof of Theorem \ref{thm:main1} we have an $\infty$-quasi-isomorphism of 1-shifted cyclic $\infty$-operads.  
    % \[
    % D\BV^* \to Q\BV.  
    % \]
    % Applying the bar construction $D$ we get a(n honest) morphism of cyclic cooperads
    % \[
    % DD\BV^* \to D Q\BV    
    % \]
    \end{proof}

\section{Comparing Feynman transforms}
\label{sec:feyn}

% \subsection{The Feynman transform of a cyclic operad}

% \subsection{Comparing two Feynman transforms, and proof of Theorem \ref{thm:main2}}

\begin{thm}\label{thm:feyn and Q}
Let $\cP$ be a cyclic operad such that $\POp((2))=H_\bullet(S^1)$.
Suppose that $\cP$ is equipped with an arity-wise bounded weight grading, compatible with the operad structure, such that the degree $-1$ generator $\Delta\in \POp((2))$ has weight 2.
Then there are quasi-isomorphisms
\[
\Phi_{g,n}: \gr_W \Feyn(\POp)((g,n)) \to \gr_{6g-6+2n-W} \AFeyn(Q\POp)((g,n))[6g-6+2n]
\]
for all $W$ and $(g,n)$ such that $2g+n\geq 3$.
\end{thm}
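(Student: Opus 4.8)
The plan is to build $\Phi_{g,n}$ out of the cyclic cooperad quasi-isomorphism $F:\BV^*\to \Bar Q\BV$ (or rather its analogue for $\POp$, using Proposition \ref{prop:F qiso}) by feeding it into the functorial constructions underlying the Feynman transform. First I would recall that the Feynman transform of a modular operad $\POp$ is computed as a colimit over the category of stable graphs of tensor products of copies of $\overline\POp$, with a differential that combines the internal differential of $\POp$, the operadic contractions along graph edges, and a ``vertex-splitting'' piece coming from the modular (genus-increasing) structure; since $\POp$ here is concentrated in genus zero, only the genus-zero contractions enter and the genus of the graph is carried entirely by its loop order. Dually, $\AFeyn(Q\POp)$ is the \emph{amputated} Feynman transform, which one should think of as the cobar-type construction on the cyclic cooperad $\Bar Q\BV$ evaluated on genus $(g,n)$ graphs, with the ``amputation'' removing the leg/propagator decorations that would otherwise double-count. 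The key point is that passing from $\POp$ to $\Bar Q\POp$ is exactly the genus-zero bar construction, so that plugging $\Bar Q\POp$ into the (amputated) graph complex and then using that $\Feyn=\Bar^c$ in each genus-zero vertex reconstitutes, after reorganizing the colimit, the Feynman transform of $\POp$ itself up to the expected degree and weight shift $k \leftrightarrow 6g-6+2n-k$, $W\leftrightarrow 6g-6+2n-W$.

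Concretely, the steps I would carry out are: (1) Set up both sides as graph complexes: $\Feyn(\POp)((g,n))$ as $\bigoplus_{\Gamma} \big(\bigotimes_{v\in V(\Gamma)}\overline\POp((\mathrm{star}(v)))\big)_{\Aut\Gamma}$ with the Feynman differential, and $\AFeyn(Q\POp)((g,n))$ as the analogous sum with each vertex decorated by $\overline{Q\POp}$ and the amputated differential. (2) Observe that, by the very definition of $\Bar Q\POp$ as the cofree cyclic cooperad on $\overline{Q\POp}$ with its bar differential, a graph decorated by $Q\POp$ at vertices together with the edge-contraction part of the amputated differential is \emph{the same data} as a graph decorated by $\Bar Q\POp$ at ``super-vertices'' obtained by collapsing trees of $Q\POp$-vertices — this is a standard reorganization of the colimit (Fubini for graph complexes / the nerve of the contraction category). (3) Now apply $F:\POp^*\xrightarrow{\sim}\Bar Q\POp$ vertexwise. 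Since $F$ is a weight-preserving quasi-isomorphism of cyclic cooperads (Proposition \ref{prop:F qiso}) and both sides are, up to completion issues handled by the arity-wise weight bound, built functorially and with filtrations by number of vertices / loop order, a spectral sequence comparison argument (compare the associated graded with respect to the filtration by the number of graph vertices, on which the induced map is $F$ applied vertexwise, tensored over automorphisms) shows the induced map on graph complexes is a quasi-isomorphism. (4) Finally, bookkeeping of the degree and weight shifts: $Q\POp((r))$ reverses weights of $\POp((r))$ and introduces the shift $[6-2r]$ plus the $v_j$ of weight $-2$; summing the local shifts $6-2r$ over the vertices of a genus $(g,n)$ graph with $E$ edges and using $\sum_v r_v = 2E + n$ and $E - V + 1 = g$ (for connected $\Gamma$) produces exactly the global shift $6g-6+2n$ and the weight reversal $W\mapsto 6g-6+2n-W$; the ``amputation'' is precisely what accounts for the propagator contributions $2E$ so that the arithmetic closes.

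I expect the main obstacle to be step (2)–(3): making the identification of ``graphs-of-$Q\POp$-vertices-with-contractions'' with ``graphs-of-$\Bar Q\POp$-vertices'' fully precise, including the signs and the $\Aut\Gamma$-coinvariants, and then running a clean spectral-sequence / filtration argument that the vertexwise application of the quasi-isomorphism $F$ is a quasi-isomorphism on the total complex. The subtlety is that the Feynman/amputated differential mixes pieces that raise and lower the number of vertices, so one must choose the filtration (by number of edges, or equivalently by first Betti number minus vertex count, refined within each genus) so that $F$ appears on the first page, and one must check the filtration is exhaustive and bounded in each fixed $(g,n)$ and weight — this is where the arity-wise weight boundedness hypothesis on $\POp$ is essential. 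The remaining verifications (equivariance, signs, the combinatorial identity for the shifts) are routine. Once step (3) is in place, $\Phi_{g,n}$ is defined as the composite of the reorganization isomorphism, the vertexwise $F$, and the inverse reorganization, and Theorem \ref{thm:feyn and Q} follows.
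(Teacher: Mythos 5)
There is a genuine gap, and it sits exactly where you flagged it, in steps (2)--(3). First, the map $F:\BV^*\to\Bar Q\BV$ of Proposition \ref{prop:F qiso} exists only for $\POp=\BV$: its construction uses the explicit low-weight basis of $\BV$ and the intersection-number polynomials $p_n$, and no analogue is constructed (or readily available) for a general cyclic operad $\POp$ with $\POp((2))=H_\bullet(S^1)$, which is the generality in which Theorem \ref{thm:feyn and Q} is stated. Second, even for $\POp=\BV$ the map $F$ has the wrong source to be ``applied vertexwise'' to $\Feyn(\POp)$: in the conventions of this paper the vertices of $\Feyn(\POp)((g,n))$ carry $\overline{\POp}((S_v))$, not its linear dual, so replacing vertex decorations via $F$ would produce a graph complex built from $\BV^*$, which is not $\Feyn(\BV)$ --- the degree and weight reversal visible in the statement must be implemented by an explicit map, not by tacitly identifying $\POp$ with $\POp^*$. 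Third, the proposed reorganization of $\AFeyn(Q\POp)$ into graphs of ``super-vertices'' decorated by $\Bar Q\POp$ is not a well-defined direct-sum decomposition: a graph of positive genus has no canonical splitting into trees, so the bar--cobar mechanism you invoke only controls the genus-zero part. The only collapsing that is canonical is the absorption of chains of bivalent vertices into edge decorations, and that is the actual mechanism of the proof.

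The paper's argument does not use $F$ at all. It rewrites both $\gr_W\Feyn(\POp)((g,n))$ and $\gr_{6g-6+2n-W}\AFeyn_{\kk}(Q\POp)((g,n))$ as sums over graphs with all vertices at least trivalent, packaging the strings of bivalent $\Delta$-vertices (respectively the $u$-decorations from $Q\POp((2))$ together with the polynomial factors $\Q[v_1,\dots,v_r]$) into edge and leg decoration spaces $V_E,V_L$ (respectively $V_E',V_L'$). The map $\Phi_{g,n}$ is then the identity on each tensor factor $\POp((S_v))$ and an explicit map $\phi_E:V_E\to V_E'$, $\phi_L:V_L\to V_L'$ on the decorations; the heart of the proof is that $\phi_E$ is a quasi-isomorphism, which reduces via a spectral sequence to the Koszulness of the algebra $\Q[u]$ and a short mapping-cone computation. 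Your filtration idea (filter by the number of $\geq 3$-valent vertices, corrected by the weight, with the arity-wise boundedness guaranteeing convergence) and your degree/weight bookkeeping in step (4) are essentially the correct closing moves, but they must be run against this local statement about edge and leg decorations rather than against a vertexwise application of $F$.
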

% \begin{proof}
We will show the Theorem in several steps.
First, we recall the explicit expression for the Feynman transforms appearing in the Theorem.
As a graded vector space, 
\[
    \Feyn(\POp)((g,n))
    =
    \bigoplus_\Gamma 
    \bigotimes_{v\in V\Gamma}
    \overline\POp((S_v))
    \otimes \bigotimes_{e\in E\Gamma}\Q[1]/\sim.
\]
Here the sum is over graphs $\Gamma$ of loop order $g$ with $n$ numbered legs. The first tensor product is over vertices $v$ of $\Gamma$ and $S_v$ is the star of $v$, i.e., the set of half-edges incident to $v$.
The second tensor product is over internal edges and produces only a degree shift and sign factors. Finally, the equivalence relation identifies isomorphic graphs.
A priori the graphs $\Gamma$ can have bivalent vertices.
However, we may consider strings of bivalent vertices as a decoration on an edge as indicated in the following picture.
\[
        \begin{tikzpicture}
        \node[ext] (v1) at (-.7,-.7) {};
        \node[ext] (v2) at (+.7,-.7) {};
        \node[ext] (v3) at (-.7,+.7) {};
        \node[ext] (v4) at (+.7,+.7) {};
        \node[ext] (b1a) at (-.3,+.7) {$\scriptscriptstyle \Delta$};
        \node[ext] (b1b) at (.3,+.7) {$\scriptscriptstyle \Delta$};
        \node[ext] (b2) at (-1.2,-1.2) {$\scriptscriptstyle \Delta$};
        \node (n1) at (-1.8,-1.8) {$1$};
        \node (n2) at (-1.2,+1.2) {$2$};
        \node (n3) at (+1.2,+1.2) {$3$};
        \draw (v1) edge (v2) edge (v3)
        (b1a) edge (v3) edge (b1b)
        (b1b) edge (v4)
        (b2) edge (v1) edge (n1)
        (v4) edge (v2) edge (n3)
        (v2) edge (v3)
        (v3) edge (n2);
        \end{tikzpicture}
        \quad
        \leftrightarrow
        \quad
        \begin{tikzpicture}
          \node[ext] (v1) at (-.7,-.7) {};
          \node[ext] (v2) at (+.7,-.7) {};
          \node[ext] (v3) at (-.7,+.7) {};
          \node[ext] (v4) at (+.7,+.7) {};
          \node (n1) at (-1.5,-1.5) {$1$};
          \node (n2) at (-1.2,+1.2) {$2$};
          \node (n3) at (+1.2,+1.2) {$3$};
          \draw 
          (v1) edge node {$\scriptstyle 0$} (v2) 
          edge node {$\scriptstyle 0$} (v3) 
          edge node {$\scriptstyle 1$} (n1)
          (v4) edge node {$\scriptstyle 0$} (v2) 
          edge node {$\scriptstyle 0$} (n3) 
          edge node {$\scriptstyle 2$} (v3)
          (v2) edge node {$\scriptstyle 0$} (v3)
          (v3) edge node {$\scriptstyle 0$} (n2);
          \end{tikzpicture}
\]
We can thus rewrite for $(g,n)\neq (1,0)$
\begin{equation}\label{equ:FeynP2}
    \Feyn(\POp)((g,n))
    \cong
    \bigoplus_\Gamma '
    \bigotimes_{v\in V\Gamma}
    \POp((S_v))
    \otimes \bigotimes_{e\in E\Gamma}V_E
    \otimes \bigotimes_{h=1}^n V_L /\sim,
\end{equation}
where now the sum is only over graphs with at least trivalent vertices.
The graded vector space of decorations on the internal edges is 
\[
V_E :=  \bigoplus_{r\geq 0} (\Q[1]\otimes \overline \POp((2)))^{\otimes r} \otimes \Q[1]
\]
and the graded vector space associated to legs is
\[
V_L=\bigoplus_{r\geq 0} (\Q[1]\otimes \overline \POp((2)))^{\otimes r}.    
\]

The differential on $\Feyn(\POp)$ has the form 
\begin{equation}\label{equ:FeynP d}
d=d_{\POp}+d_c
\end{equation}
where $d_{\POp}$ is induced by the differential on $\POp$ and acts on the vertex decorations and $d_c$ acts by contracting an internal edge.
We may split the part $d_c$ further,
\[
d_c=d_{c,3} + d_{c,mix},
\]
with $d_{c,3}$ contracting an edge between two $\geq 3$-valent vertices, and $d_{c,mix}$ contracting an edge between a bivalent and a trivalent vertex.
Note that contracting an edge between two bivalent vertices yields zero because $\Delta^2=0$.

Next we turn to $\Feyn_\kk(Q\POp)((g,n))$. Proceeding similarly as before we rewrite
\begin{align}
    \nonumber
    \AFeyn_\kk(Q\POp)((g,n))
    &=
    \bigoplus_\Gamma 
    \bigotimes_{v\in V\Gamma}
    \overline{Q\POp}((S_v))/\sim
    \\ \nonumber
    &=
    \bigoplus_\Gamma '
    \bigotimes_{v\in V\Gamma}
    \left(\POp((S_v))[6-2|S_v|]\otimes \Q[v]^{\otimes S_v} \right)
    \otimes \bigotimes_{e\in E\Gamma} 
    \left(\bigoplus_{r\geq 0}
    u\Q[u][1]^{\otimes r}
    \right) 
    /\sim
    \\ \nonumber
    &=
    \bigoplus_\Gamma '
    \bigotimes_{v\in V\Gamma}
    \left(\POp((S_v))[6-2|S_v|]\right)
    \otimes \bigotimes_{e\in E\Gamma} 
    \underbrace{\left(
        \bigoplus_{r\geq 0}
        \Q[v]\otimes u\Q[u][1]^{\otimes r} \otimes \Q[v]
    \right) }_{=:V_E'[2]}
    \otimes 
    \bigotimes_{j=1}^n 
    \underbrace{
        % \left(
        \Q[v] 
    % \right) 
    }_{=:V_L'[2]}
    /\sim
    \\&\cong \label{equ:FeynQP3}
    \left(\bigoplus_\Gamma '
    \bigotimes_{v\in V\Gamma}
    \POp((S_v))
    \otimes \bigotimes_{e\in E\Gamma} 
    V_E'
    \otimes 
    \bigotimes_{j=1}^n 
    V_L'
    /\sim\right)[6-6g-2n]
    .
\end{align}
Here we performed the following simplifications.
The first line is the definition of the amputated Feynman transform.
The outer sum is over graphs that do not have legs attached to bivalent vertices.
Note that we do not have a degree shift due to the edges owed to the fact that $Q\POp$ is a 1-shifted cyclic operad.
In the second line the outer sum runs only over $\geq 3$-valent graphs, and we instead consider strings of bivalent vertices as edge decorations as before.
In the third line we move the factors $\Q[v]$ that are part of the definition of $Q\POp$ into the corresponding edge or leg decorations. Finally, in the last line we have just taken out the degree shifts to reach an expression formally similar to \eqref{equ:FeynP2}.
The differential on $\Feyn_\kk(Q\POp)((g,n))$ now has the form 
\[
d = d_{Q\POp} + d_c + d_{cc}
\]
with $d_{Q\POp}=d_{\POp} + d_v$ induced from the differential on $Q\POp$,  $d_c$ acting by contracting edges and $d_{cc}$ acting by contracting strings of two edges and a bivalent vertex in the middle, applying the ternary composition on $Q\POp$.
\[
    d_{cc}\colon
\begin{tikzpicture}
\node[ext] (v) at (0,0) {$\scriptstyle X$};
\node[ext] (w) at (1.4,0) {$\scriptstyle Y$};
\node[ext] (m) at (0.7,0) {$\scriptstyle u^n$};
\draw (m) edge (v) edge (w) 
(w) edge +(.5,.5) edge +(.5,0) edge +(.5,-.5)
(v) edge +(-.5,.5) edge +(-.5,0) edge +(-.5,-.5);
\end{tikzpicture}
\mapsto 
\begin{tikzpicture}
    \node[ext] (w) at (1.4,0) {$\scriptstyle Z$};
    \draw 
    (w) edge +(.5,.5) edge +(.5,0) edge +(.5,-.5)
    edge +(-.5,.5) edge +(-.5,0) edge +(-.5,-.5);
 \end{tikzpicture}    
 \quad \text{ with }
 Z=\mu_{a,1,2,b}(X,u^n,Y)
\]
We may again rearrange the terms of the differential as
\begin{align}\label{equ:FeynQP d2}
d &= d_{\POp}
+d_{v}
+
d_{c,3}
+
d_{c,E} + d_{cc},
\end{align}
with $d_{c,3}$ the part of $d_c$ contracting an edge between two $\geq 3$-valent vertices and $d_{c,E}$ contracting edges incident to at least one bivalent vertex.

Now we construct $\Phi_{g,n}$ explicitly. This does not use the weight grading.
Comparing \eqref{equ:FeynP2} to \eqref{equ:FeynQP3} we define just $\Phi_{g,n}$ as the identity on the factors $\POp$, and by applying the following maps to the edge and vertex decorations:
\begin{gather}\label{equ:phiE def}
\phi_E : V_E \to V_E' \\
\Delta ^{\otimes k} \mapsto \sum_{j=0}^{k} 
\frac{(-1)^{k_e-j}}{j!(k_e-j)!} v^j \otimes u \otimes v^{k-j}
\in \Q[v] \otimes u\Q[u] \otimes \Q[v]\subset V_E'
\end{gather}
and 
\begin{gather*}
    \phi_L : V_L \to V_L' \\
    \Delta^{\otimes k} \mapsto 
    v^k
    \in \Q[v] \subset V_L'.
\end{gather*}

\begin{lemma}\label{lem:Phi def}
    For each $(g,n)$ such that $2g+n\geq 3$ the morphism $\Phi_{g,n}$ defined above is a well-defined morphism of complexes.
\end{lemma}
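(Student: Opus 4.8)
The plan is to verify, separately, that $\phi_E$, $\phi_L$ (hence $\Phi_{g,n}$) are well defined, and then that $\Phi_{g,n}$ commutes with the differentials.

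\emph{Well-definedness.} The only thing to check is that $\phi_E$ is compatible with the symmetry of the edge decoration space $V_E$ under reversing the string of bivalent vertices along an edge; this symmetry carries a Koszul sign coming from the odd edge generators and from the degree $-1$ decorations $\Delta$. A short computation with \eqref{equ:phiE def} shows that exchanging the two $\Q[v]$-factors in the defining formula multiplies $\phi_E(\Delta^{\otimes k})$ by $(-1)^k$, and one checks that this agrees with the reversal sign of a string of $k$ bivalent $\Delta$-vertices on the $\Feyn(\POp)$ side; $\phi_L$ has nothing to check. Since $\Phi_{g,n}$ is assembled tensor-factor by tensor-factor (identity on each $\POp((S_v))$, $\phi_E$ on edges, $\phi_L$ on legs), it automatically respects the equivalence relation $\sim$ identifying isomorphic graphs. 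I would also record two facts immediate from \eqref{equ:phiE def}: the image of $\Phi_{g,n}$ lies in the part of \eqref{equ:FeynQP3} in which every edge of the underlying $\geq 3$-valent graph carries exactly one bivalent vertex, decorated by $u^1$; and $\Phi_{g,n}$ carries weight $W$ to weight $6g-6+2n-W$, so that it is indeed a map between the claimed graded pieces of the two complexes.

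\emph{Chain map property.} The structural point that makes this manageable is that both differentials \eqref{equ:FeynP d} and \eqref{equ:FeynQP d2}, in their refined splittings, are sums of operations each supported near a single vertex, a single edge (together with its bivalent string), or a single leg, while $\Phi_{g,n}$ is a tensor product of maps attached to exactly these pieces. Hence $\Phi_{g,n}\circ d=d\circ\Phi_{g,n}$ may be checked locally, one configuration at a time. The vertex term $d_{\POp}$ is intertwined trivially: $\Phi_{g,n}$ is the identity on the factors $\POp((S_v))$, and $d_{\POp}$ does not touch edge or leg decorations, since it acts as $0$ on $\overline{\POp}((2))=\Q\Delta$ ($\Delta$ being a cocycle). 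The content is then the edge and leg combinatorics. Fixing an edge joining two $\geq 3$-valent vertices $x,y$ and carrying $k$ bivalent $\Delta$-vertices, I would check the following three matchings:
\begin{itemize}
\item $\Phi_{g,n}\circ d_{c,mix}=d_v\circ\Phi_{g,n}$, which comes down to the elementary reindexing identity $(j+1)\cdot\tfrac{(-1)^{k-1-j}}{(j+1)!\,(k-1-j)!}=\tfrac{(-1)^{k-1-j}}{j!\,(k-1-j)!}$ for the coefficients of \eqref{equ:phiE def}, together with its counterpart for $\phi_L$ on a leg (this last point is where one must make sure the normalization of the leg decoration basis is consistent with the $\partial_v$ appearing in $d_v$);
\item $\Phi_{g,n}\circ d_{c,3}=d_{cc}\circ\Phi_{g,n}$, where only the $k=0$ summand contributes on either side, and $\mu_{a,1,2,c}(x\otimes p,\,u^1,\,y\otimes r)=(-1)^{|x|}(x\circ_{a,c}y)\otimes p|_{v_a=0}\,r|_{v_c=0}$ reproduces, up to sign, the plain operadic composition $x\circ_{a,c}y$ that $d_{c,3}$ outputs;
\item $d_{c,3}\circ\Phi_{g,n}=0$ because the image of $\Phi_{g,n}$ contains no bare edge between two $\geq 3$-valent vertices, and $d_{c,E}\circ\Phi_{g,n}=0$ because the two bare-edge contractions flanking each $u^1$-vertex cancel, using $(k-j)\cdot\tfrac{(-1)^{k-j}}{j!\,(k-j)!}=-\tfrac{(-1)^{k-1-j}}{j!\,(k-1-j)!}$.
\end{itemize}
Summing the local contributions gives $\Phi_{g,n}\circ d=d\circ\Phi_{g,n}$.

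The conceptual content is light: every cancellation above is an elementary identity among the coefficients defining $\phi_E$ and $\phi_L$, which are manufactured precisely so that these cancellations go through, and the topological recursion of Lemma \ref{lem:top recur} plays no role here (it enters only later, in Proposition \ref{prop:F morphism}). I therefore expect the main obstacle to be purely organizational: carrying the Koszul signs of the odd edge generators, the $1$-shifted sign conventions built into the compositions $\mu$, and the $(-1)^{|x|}$ prefactors correctly through the finitely many local configurations, and verifying that the normalizations of the edge and leg decoration bases are compatible with how $d_v$ differentiates the variables $v$.
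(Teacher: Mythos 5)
Your proposal is correct and follows essentially the same route as the paper: well-definedness is reduced to the $S_2$-equivariance of $\phi_E$ (sign $(-1)^k$ under reversal), and the chain-map property is checked by matching the refined pieces of the differentials term by term ($\Phi d_{\POp}=d_{\POp}\Phi$, $d_{c,E}\circ\Phi=0$, $d_{c,3}\circ\Phi=0$, $\Phi\circ d_{c,3}=d_{cc}\circ\Phi$, $\Phi\circ d_{c,mix}=d_v\circ\Phi$), which the paper dispatches by ``direct inspection'' and you spell out via the elementary coefficient identities. Your remark about the factorial normalization of the leg decoration (so that $\partial_v$ matches the single contraction $d_{c,mix}$) is a legitimate point of care that the paper's terse formula for $\phi_L$ glosses over.
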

\begin{proof}
We first check that $\Phi_{g,n}$ is well-defined as a map of graded vector spaces, i.e., that the prescription above is compatible with taking the equivalence relations $\sim$. However, the action of isomorphisms on the edge decorations factors through the $S_2$-action by reversing the order of factors. %\footnote{Note that this is not true for $(g,n)=(0,1)$, whence we exclude that case.}
So one just needs to check that the map $\phi_E$ is $S_2$-equivariant: The $k$-th summand in $V_E$ is one-dimensional. The generator $\Delta^{\otimes k}$, corresponding to a string of $k$ vertices and $k+1$ edges, each of degree $-1$, transforms with a sign $(-1)^k$ under the transposition. But the same is true for its image under $\phi_E$, see \eqref{equ:phiE def}. 

Next we check that $\Phi_{g,n}$ is compatible with the differentials. Comparing \eqref{equ:FeynP d} to \eqref{equ:FeynQP d2} we have to check that 
\[
    \Phi_{g,n} \circ (d_{\cP}+ d_{c,3} + d_{c,mix})
    =
    (d_{\cP}+ d_{v}+ d_{c,3}  + d_E + d_{cc})\circ \Phi_{g,n}.
\]
It is clear that $\Phi_{g,n} d_{\cP}=d_{\cP}\Phi_{g,n}$.
Furthermore, the image of $\phi_E$ is closed under $d_E$ so that $d_E \circ \Phi_{g,n}=0$.
Next, the image of $\phi_E$ always contains one "$u$", so that there is never an undecorated edge between $\geq 3$-valent vertices in the image of $\Phi_{g,n}$. Hence $d_{c,3} \circ \Phi_{g,n}=0$.
Looking at the definition of the ternary composition in $Q\POp$ and the image of $\phi_E$ we see that 
\[
    \Phi_{g,n} \circ d_{c,3} = d_{cc} \circ \Phi_{g,n}.
\] 
Finally, by direct inspection we have 
\[
    \Phi_{g,n} \circ d_{c,mix} = d_v \circ \Phi_{g,n},
\]
so that $\Phi_{g,n}$ indeed intertwines the differentials.
\end{proof}

\begin{lemma}\label{lem:Phi weight}
    If $\POp$ is equipped with a weight grading such that $\Delta$ has weight $+2$, then the morphisms $\Phi_{g,n}$ restrict to morphisms between the weight graded pieces 
    \[
        \Phi_{g,n}: \gr_W \Feyn(\POp)((g,n)) \to \gr_{6g-6+2n-W}\AFeyn(Q\POp)((g,n))[6g-6+2n]
    \]
    \end{lemma}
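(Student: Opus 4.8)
Since $\Phi_{g,n}$ was constructed in Lemma \ref{lem:Phi def} without using the weight grading and is already known to be a chain map, the plan is simply to track how it shifts weights.

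First I would fix the two weight gradings in the reduced models \eqref{equ:FeynP2} and \eqref{equ:FeynQP3}. On $\Feyn(\POp)((g,n))$ the weight is the one induced from $\overline\POp$, i.e.\ the sum of the weights of the vertex decorations; after collapsing strings of bivalent $\Delta$'s into edge (resp.\ leg) decorations a string of $k$ copies of $\Delta$ contributes $2k$ to the weight, as $\Delta$ has weight $2$. On $\AFeyn(Q\POp)((g,n))$ the weight is induced from the grading on $Q\POp$ of Section \ref{sec:Q weight}, which is compatible with the modular operad structure and hence descends to the Feynman transform; concretely, in \eqref{equ:FeynQP3} a vertex decorated by $x\in\POp((S_v))$ contributes $2|S_v|-6-W(x)$, each factor $u$ in an edge decoration contributes $+2$, and each factor $v$ in an edge or leg decoration contributes $-2$. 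The global degree shift $[6-6g-2n]$ extracted in \eqref{equ:FeynQP3} plays no role here, the weight grading being independent of the cohomological degree.

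Second, I would observe that $\Phi_{g,n}$ is weight-homogeneous: it is the identity on the vertex factors $\POp((S_v))$, we have $\phi_L(\Delta^{\otimes k})=v^k$ of weight $-2k$, and every monomial on the right-hand side of \eqref{equ:phiE def} contains exactly $k$ factors $v$ and exactly one factor $u$, hence is of weight $2-2k$. Consequently a decorated graph $\Gamma$ whose vertices carry $x_v\in\POp((S_v))$ and whose edges and legs carry $k_e$ resp.\ $k_j$ copies of $\Delta$, which therefore has weight $W=\sum_v W(x_v)+2\sum_e k_e+2\sum_j k_j$, is sent by $\Phi_{g,n}$ to an element of weight
\[
\sum_{v}\bigl(2|S_v|-6-W(x_v)\bigr)+\sum_e(2-2k_e)-\sum_j 2k_j
= \sum_{v}\bigl(2|S_v|-6\bigr)+2|E\Gamma|-W .
\]

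Finally comes the combinatorial bookkeeping: using $\sum_{v}|S_v|=2|E\Gamma|+n$ (each internal edge carries two half-edges, each leg one) together with the Euler relation $|E\Gamma|-|V\Gamma|=g-1$ for a connected loop-order-$g$ graph, one gets $\sum_{v}(2|S_v|-6)+2|E\Gamma|=6|E\Gamma|-6|V\Gamma|+2n=6(g-1)+2n=6g-6+2n$, so that $\Phi_{g,n}$ sends weight $W$ to weight $6g-6+2n-W$, as claimed. I do not anticipate any genuine obstacle; the only point demanding care is translating the convention $W(x\otimes p)=2|S_v|-6-W(x)-2\deg p$ on $Q\POp$ into per-edge and per-leg contributions once the $\Q[v]$-factors have been redistributed as in \eqref{equ:FeynQP3}, and checking that $\phi_E$ and $\phi_L$ really land in a single weight, so that the statement even makes sense.
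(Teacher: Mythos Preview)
Your proposal is correct and follows essentially the same approach as the paper's own proof: both compute the weight of a decorated graph on each side and verify that the difference is $6g-6+2n$. The paper's version is terser, packaging $\sum_e k_e+\sum_j k_j$ as the total number $b$ of bivalent vertices and writing the target weight as $\sum_v(2k_v-6-W_v)+2e-2b$ without spelling out the Euler-characteristic identity $\sum_v(2k_v-6)+2e=6g-6+2n$; you make this explicit, which is a virtue.
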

    \begin{proof}
        Let $\Gamma\in \Feyn(\POp)((g,n))$ be a graph with vertices $v$ of valence $k_v$ decorated by elements in $\gr_{W_v}\BV((k_v))$, $b$ bivalent vertices decorated by $\Delta$ and $e$ internal edges.
        Then $\Gamma$ is concentrated in weight
        \[
        W(\Gamma):=\sum_{v\in V\Gamma} W_v +2 b.    
        \] 
        Let $\Gamma'=\Phi_{g,n}(\Gamma)\in \AFeyn(Q\POp)((g,n))$ be the image under $\Phi_{g,n}$. Then, taking into account the definition of the weight grading on $Q\POp$ of section \ref{sec:Q weight}, $\Gamma'$ lives in weight 
        \[
            W(\Gamma'):=\sum_{v\in V\Gamma} 
        (2k_v -6-W_v) + 2e-2b
        =
        6g-6+2n - W(\Gamma).
        \]
    \end{proof}

    \begin{lemma}\label{lem:VEVL qiso}
        The morphisms $\phi_E:(V_E,0) \to (V_E',d_E)$ and $\phi_L:(V_L,0) \to (V_L',0)$ are quasi-isomorphisms.
    \end{lemma}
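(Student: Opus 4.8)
The plan is to analyze the two complexes $(V_E, 0)$ and $(V_E', d_E)$ directly, and similarly for the leg decorations. Recall that $V_E = \bigoplus_{r\geq 0}(\Q[1]\otimes\overline\POp((2)))^{\otimes r}\otimes\Q[1]$, and since $\overline\POp((2)) = \Q\Delta$ with $\Delta$ in degree $-1$, the $r$-th summand is one-dimensional, spanned by $\Delta^{\otimes r}$ (together with the ambient degree shifts), sitting in a single cohomological degree. The differential is zero, so $H(V_E,0) = V_E$ with the displayed one-dimensional graded pieces. On the other side, $V_E'[2] = \bigoplus_{r\geq 0}\Q[v]\otimes u\Q[u][1]^{\otimes r}\otimes\Q[v]$, and $d_E$ is the part of the Feynman differential contracting edges adjacent to bivalent vertices inside a string, i.e. it applies the binary compositions $\mu_{2,1}(u^i,u^j) = -u^{i+j}$ at interior bivalent vertices and the compositions $\mu_{a,1}(x\otimes p, u^n) = \pm x\otimes\partial_{v_a}^n p$ at the two ends, which here act on the boundary factors $\Q[v]$ by differentiation.

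First I would make the bookkeeping explicit: an element of the $V_E'$-summand indexed by the string length can be written as $v^a\otimes u^{n_1}\otimes\cdots\otimes u^{n_k}\otimes v^b$ with all $n_i\geq 1$, and $d_E$ is a sum of terms, one for each of the $k+1$ "edges" of the string, each merging two adjacent $u$-blocks (or differentiating a boundary $v$-power against the adjacent $u$-block). The key observation is that this complex computes the homology of a string of bivalent $Q\BV((2))$'s glued between two "half-open" boundary factors, and that the circle-bundle model behind $Q\BV$ — the homotopy quotient $\BV((2))\sslash H_\bullet(S^1)$, which is what $\Q[u]$ encodes — is designed precisely so that such strings are acyclic except in the expected single degree. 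Concretely I would set up a filtration on $(V_E', d_E)$ by the total $u$-degree, or equivalently by string length $k$, and identify the associated graded differential as the one that only merges $u$-blocks; its homology is then computed by a standard telescoping / Koszul argument (the same telescopic cancellations already appearing in the proof of Proposition \ref{prop:Q well defined} and in Lemma \ref{lem:poly top recur}), leaving a one-dimensional space in each string-length-free total degree. Comparing the resulting graded dimensions with those of $V_E$, and checking that $\phi_E$ — which sends $\Delta^{\otimes k}$ to $\sum_j \frac{(-1)^{k-j}}{j!(k-j)!}v^j\otimes u\otimes v^{k-j}$, i.e. into the length-one part with a binomially-weighted boundary decoration — hits a generator of each homology class, would finish that half. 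The statement $\phi_L:(V_L,0)\to(V_L',0)$ is a quasi-isomorphism is by comparison trivial: both differentials are zero, $V_L$ has the one-dimensional pieces $\Q\Delta^{\otimes k}$, $V_L' = \Q[v]$ has the one-dimensional pieces $\Q v^k$, and $\phi_L(\Delta^{\otimes k}) = v^k$ is manifestly a degree-preserving isomorphism of graded vector spaces, hence I would dispatch it in one line.

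I expect the main obstacle to be the acyclicity computation for $(V_E',d_E)$ in the "interior" of the string: one must show that merging adjacent $u^{n_i}$-blocks, summed over all interior edges with the Koszul signs coming from the $\Q[1]$ shifts, produces a contractible complex away from a single surviving generator — in effect that the reduced bar-type complex of the graded-commutative-up-to-sign "algebra" $u\Q[u]$ (with the $-$ sign in $\mu_{2,1}$) is acyclic in positive string length. The cleanest route is probably to exhibit an explicit contracting homotopy on $(V_E',d_E)$, e.g. one that "absorbs" the leftmost $u$-block into the boundary $v^a$ via the adjoint of $\partial_{v}$, modelled on the telescoping identity $\sum_{j=0}^{n}(-1)^{n-j}\partial_{v_a}^j\partial_{v_c}^{n-j}(\partial_{v_a}+\partial_{v_c}) = -\partial_{v_a}^{n+1}-(-1)^n\partial_{v_c}^{n+1}$ already used above; once such a homotopy is written down, well-definedness with respect to the $S_2$-action (already verified for $\phi_E$ in Lemma \ref{lem:Phi def}) and the identification of the surviving generator with the image of $\phi_E$ are routine. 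I would therefore structure the write-up as: (1) $\phi_L$ is an isomorphism, done; (2) filter $(V_E',d_E)$ and reduce to the interior-merging differential; (3) produce the contracting homotopy and read off $H(V_E',d_E)$; (4) check $\phi_E$ is a termwise iso on homology.
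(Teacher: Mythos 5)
Your overall route is the one the paper takes: $\phi_L$ is an isomorphism on the nose, and for $\phi_E$ one filters $(V_E',d_E)$ so that the associated graded differential is only the interior merging of $u$-blocks, identifies the middle factor with the reduced bar construction of the Koszul algebra $\Q[u]$, and then checks that $\phi_E$ hits the surviving cohomology. (Your filtration by total $u$-degree works and isolates the same associated graded as the paper's filtration by the number of $v$'s; your parenthetical ``or equivalently by string length'' is not correct, since both the merging and the boundary-absorption terms drop the string length by one, but this aside is harmless if you actually use the $u$-degree.)

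The genuine gap is in what you claim survives after the first page. Koszulness of $\Q[u]$ reduces the middle factor to $\Q\oplus\Q u$, so the $E^1$ page is $\Q[v]\otimes(\Q\oplus\Q u)\otimes\Q[v]$ --- this is \emph{not} one-dimensional in each total degree (in the degree corresponding to $k$ shifts it has dimension $2k+3$), so ``comparing the resulting graded dimensions with those of $V_E$'' at this stage does not close the argument. The essential remaining step is the $d_1$ differential, namely the boundary absorption of the single surviving $u$ into the two $\Q[v]$ factors: with $e_{n,a}=\tfrac{1}{a!(n-a)!}v^a\otimes 1\otimes v^{n-a}$ and $f_{n,a}=\tfrac{1}{a!(n+1-a)!}v^a\otimes u\otimes v^{n+1-a}$ one gets $f_{n,a}\mapsto e_{n,a-1}+e_{n,a}$, i.e.\ the mapping cone of a surjective map $\Q^{n+2}\to\Q^{n+1}$ with one-dimensional kernel spanned by the alternating vector $(1,-1,\dots,(-1)^{n+1})$. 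Only here does the cohomology become one-dimensional per degree, and this is precisely where the binomial coefficients $\tfrac{(-1)^{k-j}}{j!(k-j)!}$ in the definition of $\phi_E$ are used: the image of $\Delta^{\otimes k}$ is exactly that kernel vector. Your alternative suggestion of a global contracting homotopy absorbing the leftmost $u$-block is plausible but not written down; as stated, the proof needs the explicit second-page computation (or an equivalent) to be complete.
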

    \begin{proof}
        It is obvious that $\phi_L$ is an isomorphism, so we focus on $\phi_E$. 
        We compute the cohomology of $V_E'$ by filtering by the total number of $v$'s, and considering the associated (bounded below and exhaustive) spectral sequence. 
        The first page of the associated graded has the form 
        \[
            E^0=
        \Q[v] \otimes \left(
            \bigoplus_{k\geq 0} (u\Q[u][1])^{\otimes k)}
        , d_{bar} \right)  \otimes \Q[v]
        \]
        with the term in parentheses being isomorphic to the reduced bar construction of the associative algebra $\Q[u]$. The associative algebra $\Q[u]$ is Koszul, hence the cohomology of the bar construction is identified with the Koszul dual algebra $\Q[\epsilon] = \Q1\oplus \Q\epsilon$. Here $1$ corresponds to the generator of the $k=0$-summand and $\epsilon$ to $u$ in the $k=1$-summand of the bar construction.
        We find that 
        \[
        E^1 = \left(
        \Q[v] \otimes 
            (\Q \oplus \Q u)
          \otimes \Q[v], d_E \right).
        \] 
        
This complex has the explicit basis and differential 
\begin{align*}
    e_{n,a} := \frac{1}{a!(n-a)!} v^a \otimes 1\otimes v^{n-a} &\xrightarrow{d_E} 0 & &n\geq a\geq 0\\
    f_{n,a} := \frac{1}{a!(n+1-a)!} v^a \otimes u\otimes v^{n+1-a}
    &\xrightarrow{d_E}
    e_{n,a-1}+e_{n,a} & &n+1\geq a \geq 0,
\end{align*}
with the convention that $e_{n,-1}=e_{n,n+1}:=0$.
The basis elements for fixed $n$ span a subcomplex isomorphic to the mapping cone of the linear map
\begin{gather*}
\Q^{n+2} \to \Q^{n+1} \\
(x_0,\dots,x_{n+1}) \mapsto (x_0+x_1,x_1+x_2,\dots,x_{n+1}+x_n).
\end{gather*}
This map is surjective with one-dimensional kernel spanned by $(1,-1,\dots, (-1)^{n+1})$. But this is just the image of the map $\phi_E$ as in \eqref{equ:phiE def}, expressed in our basis. The spectral sequence abuts here by degree reasons, so that the Lemma follows.
    \end{proof}
    
    \begin{lemma}\label{lem:Phi qiso}
        Suppose that $\POp$ is equipped with an additional weight grading, that is arity-wise bounded, compatible with the operad structure and such that $\Delta$ is of weight $2$.
        Then for each $(g,n)$ such that $2g+n\geq 3$ the morphism $\Phi_{g,n}$ defined above is a quasi-isomorphism.
    \end{lemma}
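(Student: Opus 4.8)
The plan is to compare the two complexes via a single spectral sequence, set up so that on the $E^1$-page the comparison reduces exactly to Lemma~\ref{lem:VEVL qiso}.

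Fix $(g,n)$ with $2g+n\geq 3$ and a weight $W$; by Lemma~\ref{lem:Phi weight} it suffices to show that $\Phi_{g,n}$ restricts to a quasi-isomorphism $\gr_W\Feyn(\POp)((g,n))\to\gr_{6g-6+2n-W}\AFeyn(Q\POp)((g,n))[6g-6+2n]$, and I work inside these fixed-weight complexes. On each of them I introduce the increasing filtration given by
\[
F \;:=\; N\cdot\#\{\text{$\geq 3$-valent vertices of }\Gamma\}\;-\;\sum_{v}W(x_v),
\]
the sum running over the $\geq 3$-valent vertices $v$ with their $\POp$-decorations $x_v$, and $N$ chosen larger than every value of $\sum_v W(x_v)$ that occurs (possible: for fixed $(g,n)$ there are only finitely many $\geq 3$-valent graph types and the weight grading on $\POp$ is arity-wise bounded — and on the source one even has $\sum_v W(x_v)\leq W$). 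As the number of $\geq 3$-valent vertices is bounded by $2g-2+n$, this is a bounded filtration, so the spectral sequences converge. Reading off~\eqref{equ:FeynP d} and~\eqref{equ:FeynQP d2}: $d_{\POp}$ preserves $F$; $d_{c,3}$ and $d_{cc}$ strictly lower the number of $\geq 3$-valent vertices, hence lower $F$; $d_v$ and $d_{c,mix}$ keep the number of $\geq 3$-valent vertices but post-compose a vertex decoration with the weight-$2$ element $\Delta$, thereby raising $\sum_v W(x_v)$ and hence lowering $F$; and $d_{c,E}$ — viewed on the edge decorations it is the differential $d_E$ of Lemma~\ref{lem:VEVL qiso} — preserves both quantities, hence $F$. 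So both are filtered complexes and $\Phi_{g,n}$, which leaves the underlying $\geq 3$-valent graph and the $x_v$ untouched, is filtered.

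On the $E^0$-page the surviving differential is $d_{\POp}$ on the source and $d_{\POp}+d_E$ on the target. The structural point that makes the computation go through is that, for a fixed $\geq 3$-valent graph $\Gamma$, $d_E$ acts only on the edge-decoration factors $V_E'$ — it merges/absorbs bivalent vertices and differentiates the $v$-powers on the half-edges, never touching an $x_v$ — while $d_{\POp}$ acts only on the vertex factors. Summing over isomorphism classes of $\Gamma$, passing to $\Aut(\Gamma)$-(co)invariants (exact over $\Q$, and compatible with $\Phi_{g,n}$ by Lemma~\ref{lem:Phi def}), and applying Künneth factorwise, the $E^1$-pages become
\[
E^1_{\mathrm{src}}=\bigoplus_{[\Gamma]}\Big(\bigotimes_{v}H(\POp((S_v)))\otimes\bigotimes_{e}V_E\otimes\bigotimes_{j}V_L\Big)^{\Aut\Gamma},
\]
\[
E^1_{\mathrm{tgt}}=\bigoplus_{[\Gamma]}\Big(\bigotimes_{v}H(\POp((S_v)))\otimes\bigotimes_{e}H(V_E',d_E)\otimes\bigotimes_{j}V_L'\Big)^{\Aut\Gamma},
\]
and $\Phi_{g,n}$ induces the identity on the $H(\POp((S_v)))$-factors, $\phi_E$ on each edge factor, and $\phi_L$ on each leg factor.

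By Lemma~\ref{lem:VEVL qiso}, $\phi_E\colon V_E\to H(V_E',d_E)$ and $\phi_L\colon V_L\to V_L'$ are isomorphisms (on $V_E$ the differential is zero, so it equals its cohomology), and $\phi_E$ is $S_2$-equivariant (Lemma~\ref{lem:Phi def}); hence $\Phi_{g,n}$ is an isomorphism on $E^1$. Since both spectral sequences arise from bounded filtrations, an isomorphism on $E^1$ propagates to $E^\infty$ and then to cohomology, so $\Phi_{g,n}$ is a quasi-isomorphism; as $W$ was arbitrary this proves the lemma. The only genuinely delicate step is the design of the filtration: it must be coarse enough for the edge-local differential $d_E$ to survive to $E^0$ (so Lemma~\ref{lem:VEVL qiso} applies and $d^0$ splits as a vertex part $\oplus$ an edge part), yet fine enough to relegate the graph-contracting pieces $d_{c,3},d_{cc}$ and the vertex-modifying pieces $d_v,d_{c,mix}$ to later pages — and it is precisely the fact that $d_v$ and $d_{c,mix}$ raise $\sum_v W(x_v)$, because $\Delta$ has weight $2$, that lets the weight-twisted refinement of the naive ``number of vertices'' filtration achieve this.
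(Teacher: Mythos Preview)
Your argument is correct and follows essentially the same route as the paper's own proof: filter by the number of $\geq 3$-valent vertices minus the total $\POp$-weight on those vertices, observe that on the associated graded only $d_{\POp}$ (source) and $d_{\POp}+d_E$ (target) survive, and then invoke Lemma~\ref{lem:VEVL qiso} edge-by-edge via K\"unneth. The only cosmetic difference is your factor $N$, which is harmless but unnecessary---since the operadic composition in $\POp$ preserves weight and the target-side contractions either preserve or raise $\sum_v W(x_v)$, the paper's choice $N=1$ already makes all of $d_{c,3},d_{cc},d_v,d_{c,mix}$ strictly decrease the filtration.
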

    \begin{proof}
    We filter both the domain and target of $\Phi_{g,n}$ by the number of $\geq 3$-valent vertices in graphs, minus the total weight on the $\geq 3$-valent vertices.
    This filtration is bounded below and exhaustive, due to the boundedness assumption on the weight grading.
    Hence it suffices to check that $\Phi_{g,n}$ induces a quasi-isomorphism on the associated graded complexes:
    \begin{align*}
        \Phi_{g,n} : (\Feyn(\POp)((g,n)), d_{\POp}  )
    \to (\AFeyn_{\kk}(Q\POp)((g,n)), d_{\POp} + d_E ).
    \end{align*}
    The differentials here do not change the underlying $\geq 3$-valent graphs, but only act on the edge and vertex decorations.
    Hence the fact that the above associated graded morphism of $\Phi_{g,n}$ is a quasi-isomorphism follows from $\phi_E$ and $\phi_L$ being quasi-isomorphisms, but that has been shown in Lemma \ref{lem:VEVL qiso}.

    % The complexes hence decompose into direct summands, one for each isomorphism class $\gamma$ of (non-decorated) graphs.
    % The subcomplex corresponding to each $\gamma$ has the form 
    % \[
    % \left( \bigotimes_{v\in V\gamma} (\POp((k_v)), d_{\POp})
    % \otimes \bigotimes_{e\in E\gamma} (V_e, d_e)
    % \otimes \bigotimes_{h=1}^n (V_h, d_h)
    % \right)_{Aut(\gamma)} ,
    % \]
    % of a coinvariant space under a finite group action of a product of complexes, one for each vertex, edge and leg.
    % The map $\Phi_{g,n}$ is induced by morphisms on the edge-and leg complexes, and the identity on the $\POp$-factors.
    % Hence it suffices that the maps on the edge- and leg complexes 
    % \begin{gather*}
    % \bigoplus_{r\geq 0} (\Q\Delta)^{\otimes r} 
    % \to 
    % \bigoplus_{r\geq 0}  \Q[v] \otimes u\Q[u]^{\otimes r}\Q[v]\\
    % \underbrace{\Delta \otimes \cdots \otimes\Delta}_{k\times }
    % \mapsto
    % \sum_{j=0}^k v^j \otimes u \otimes v^{k-j}
    % \end{gather*}
    % and
    % \begin{gather*}
    %     \bigoplus_{r\geq 0} (\Q\Delta)^{\otimes r} 
    %     \to 
    %     \Q[v] \\
    %     \underbrace{\Delta \otimes \cdots \otimes \Delta}_{k\times }
    %     \mapsto
    %     v^k
    % \end{gather*}    
    % are quasi-isomorphisms. But this is the content of Lemma \ref{lem:edge complexes} for the first complex, and obvious for the second.
    \end{proof}
    
    From the above Lemmas Theorem \ref{thm:feyn and Q} immediately follows.

    \begin{proof}[Proof of Theorem \ref{thm:main2}]
    We apply Theorem \ref{thm:feyn and Q} to the case of $\POp=\BV$, obtaining quasi-isomorphhisms (as long as  $2g+n\geq 3$)
    \[
        \Phi_{g,n}: \Feyn(\BV)((g,n)) \to \AFeyn_{\kk}(Q\BV)((g,n))[6g-6+2n].
    \]
    % The morphism $\Phi_{g,n}$ is compatible with the weight grading as follows:
    % Let $\Gamma\in \Feyn(\BV)((g,n))$ be a graph with vertices $v$ of valence $k_v$ decorated by elements in $\gr_{W_v}\BV((k_v))$, $b$ bivalent vertices decorated by $\Delta$ and $e$ internal edges.
    % Then $\Gamma$ is concentrated in weight
    % \[
    % W(\Gamma):=\sum_{v\in V\Gamma} W_v +2 b.    
    % \] 
    % Let $\Gamma'=\Phi_{g,n}(\Gamma)\in \AFeyn(Q\BV)((g,n))$ be the image. Then, taking into account the definition of the weight grading on $Q\BV$ of section \ref{}, $\Gamma'$ lives in weight 
    % \[
    %     W(\Gamma'):=\sum_{v\in V\Gamma} 
    % (2k_v -6-W_v) + 2e-2b
    % =
    % 6g-6+2n - W(\Gamma).
    % \]
    % In particular, $\Phi_{g,n}$ induces quasi-isomorhisms of dg vector spaces
    % \[
    %     \Phi_{g,n}: \gr_W \Feyn(\BV)((g,n)) \to \gr_{6g-6+2n-W}\AFeyn(Q\BV)((g,n))[6g-6+2n]
    % \]
    % for each weight separately.
    Furthermore, by Theorem \ref{thm:main1} we have a weight-preserving quasi-isomorphism of 1-shifted cyclic $\infty$-operads $D\BV^*\to Q\BV$. This induces morphisms between the amputated Feynman transforms 
    \[
        \gr_{6g-6+2n-W}\AFeyn_{\kk}(D\BV)((g,n))[6g-6+2n]
        \to 
        \gr_{6g-6+2n-W}\AFeyn_{\kk}(Q\BV)((g,n))[6g-6+2n].
    \]
    These morphisms are quasi-isomorphisms since the (amputated) Feynman transform preserves quasi-isomorphisms.
    Hence Theorem \ref{thm:main2} follows.
    \end{proof}

\bibliographystyle{amsalpha}

\end{document}